\newtheorem{theorem}{Theorem}
\newtheorem{lemma}{Lemma}
\newcommand{\R}{\mathbb{R}}
\begin{document}

%%%%%%%%% TITLE
\title{Online and Batch Supervised Background Estimation via L1 Regression}

\author{Aritra Dutta\\
	KAUST\\
	{\tt\small aritra.dutta@kaust.edu.sa}
	\and
	Peter Richt\'{a}rik\\
	KAUST, Edinburgh, MIPT\\
	{\tt\small peter.richtarik@kaust.edu.sa}
}

\maketitle
%\thispagestyle{empty}

%%%%%%%%% ABSTRACT
\begin{abstract} We propose a surprisingly simple model for supervised video background estimation. Our model is based on $\ell_1$ regression. As existing methods for $\ell_1$ regression do not scale to high-resolution videos, we propose several simple and scalable methods for solving the problem, including iteratively reweighted least squares, a homotopy method, and stochastic gradient descent. We show through extensive experiments that our model and methods match or outperform the state-of-the-art online and batch methods in virtually all quantitative and qualitative measures.
\end{abstract}

%%%%%%%%% BODY TEXT
\section{Introduction}
Video background estimation and moving object detection is a classic problem in computer vision. Among several existing approaches, one of the most prevalent ones is to solve it in a matrix decomposition framework~\cite{Bouwmans201431,Bouwmans2016}. Let $A\in \R^{m\times n'}$ be  a matrix encoding $n'$ video frames, each represented as a vector of size $m$.  Our task is to decompose all frames of the video  into background and foreground frames: $A  = B+ F.$

As described above, the problem is ill-posed, and more information about the structure of the decomposition is needed. In practice, background videos are often static or close to static, which typically means that $B$ is of low rank~\cite{oliver1999}. On the other hand, foreground usually represents objects occasionally moving across the foreground, which typically means that $F$ is sparse. These and similar observations leads to the development of models of the form~\cite{Bouwmans2016, Bouwmans201431, APG,LinChenMa,duttaligongshah}:
\begin{equation}\label{eq:general}\min_{B} f_{\rm rank}(B) + f_{\rm spar}(A-B),\end{equation}
where $f_{\rm rank}$ is a suitable function that encourages the rank of $B$ to be low, and $f_{\rm spar}$ is a suitable function that encourages the foreground $F$ to be sparse. 

Xin \etal~\cite{xin2015} recently proposed a background estimation model---generalized fused lasso~(GFL)---arising as a special case of \cite{golub} with the choice $f_{\rm rank}(B) = {\rm rank}(B)$ and $f_{\rm spar}(F) = \lambda \|F\|_{\rm GFL}$:  
\begin{align}\label{gfl}
\min_{B}{\rm rank}(B)+\lambda\|A-B\|_{\rm GFL}.
\end{align}
In this model, $\|\cdot\|_{GFL}$ is the ``generalized fused lasso'' norm, which arises from the combination of the $\ell_1$ norm (to encourage sparsity) and a local spatial total variation norm (to encourage connectivity of the foreground).

\paragraph{Supervised background estimation.} In the modern world, {\em supervised} background estimation models play an important role in the analysis of the data captured from the surveillance cameras. As the name suggests, these models rely on  prior availability of some ``training'' background frames, $B_1 \in \R^{m\times r}$. Without loss of generality, assume that  the training background frames correspond to the first $r$ frames of $B$, i.e., $B = [B_1\;B_2]$, where $B_1\in \R^{m\times r}$ is known and $B_2\in \R^{m\times n}$ is to be determined, with $n'=r+n$. Let $A=[A_1\;A_2]$ be partitioned accordingly, and let $F_2 = A_2-B_2 \in \R^{m\times n}$. In this setting, \cite{xin2015} further specialized the model \eqref{gfl} by adding the extra assumption that ${\rm rank}(B)={\rm rank}(B_1)$.%\footnote{~With this specialization, using the $\|\cdot\|_{\rm GFL}$ norm, problem~\eqref{gfl} is a constrained low rank approximation problem as in~\cite{golub} and can be written as follows:
%\begin{equation*}\label{gfl_golub}
%\min \{\|A-B\|_{\rm GFL} \; \text{subject to} \; B = [B_1\;B_2], {\rm rank}(B)\le r, B_1=A_1\}.
%\end{equation*}}
As a result, the columns of the unknown matrix $B_2$ can be written as a linear combinations of the columns of $B_1$. Specifically, $B_2$ can be written as $B_1S$, where $S\in \R^{r\times n}$ is a coefficient matrix. Thus, problem (\ref{gfl}) can be written in the form
\begin{align}\label{gfl_sfl}
&\min_{S'}{\rm rank}(B_1[I\;S'])+\lambda\|A_2 - B_1S'\|_{\rm GFL}.
\end{align}
While \eqref{gfl_golub} is the the problem Xin \etal~\cite{xin2015}  {\em wanted} to solve, they did not tackle it directly and instead further assumed that $S$ is sparse, and solved the modified problem
\begin{align}\label{gfl_sfl_sparse}
&\min_{S'}\|S'\|_{1}+\lambda\|A_2 - B_1 S' \|_{\rm GFL},
\end{align}
where $\|\cdot\|_{1}$ denotes the $\ell_1$ norm of matrices.

\section{New Model}

In this paper we propose a new supervised background estimation model, one that we argue is much better than \eqref{gfl_sfl_sparse} in several aspects. Moreover, our model and the methods we propose significantly outperform other state-of-the-art methods. 

\paragraph{L1 regression.} As in \eqref{gfl_sfl_sparse}, our model is also based on a modified version of \eqref{gfl_sfl}. We do not need to assume any sparsity on $S'$, and instead make the trivial observation that ${\rm rank}(B_1[I \;S']) = {\rm rank}(B_1)$. Since $B_1$ is known, the first term in the objective function \eqref{gfl_sfl} is constant, and hence does not contribute to the optimization problem. Hence we may drop it. Moreover, we suggest replacing the GFL norm by the $\ell_1$ norm. This leads to a very simple {\em L1 (robust) regression problem}:
\begin{equation} \min_{S' \in \R^{r\times n}} \|A_2-B_1 S'\|_1. \label{model:L1-version1}
\end{equation}

\paragraph{Dimension reduction.} The above model can be further simplified. It may be the case that the rank of $B_1 \in \R^{m\times r}$ is smaller\footnote{If this is not the case, it still may be the case that the column space of $B_1$ can be very well approximated by a space with less or much less than $r$ dimensions.} (or much smaller) than $r$.  In such a situation, we can replace  $B_1$  in \eqref{model:L1-version1} by a thinner matrix, which allows us to reduce the dimension of the optimization variable $S'$. In particular, let $B_1=QR$ 
 be the QR decomposition of $B_1$, where $Q\in \R^{m\times k}$, $R\in \R^{k\times r}$, $k={\rm rank}(B_1)$, and $Q$ has orthonormal columns.  Since the column space of $B_1$ is the same as the column space of $Q$, by using the substitution $B_1 S' = Q S $, we can reformulate \eqref{model:L1-version1} as the {\em lower-dimensional }  L1 regression problem:
 \begin{equation} \boxed{ \min_{S \in \R^{k\times n}} f(S) :=\|A_2-Q S\|_1} \label{dutta_richtarik_l1}
\end{equation}
 \begin{figure}
    \centering
    \includegraphics[width=.5\textwidth]{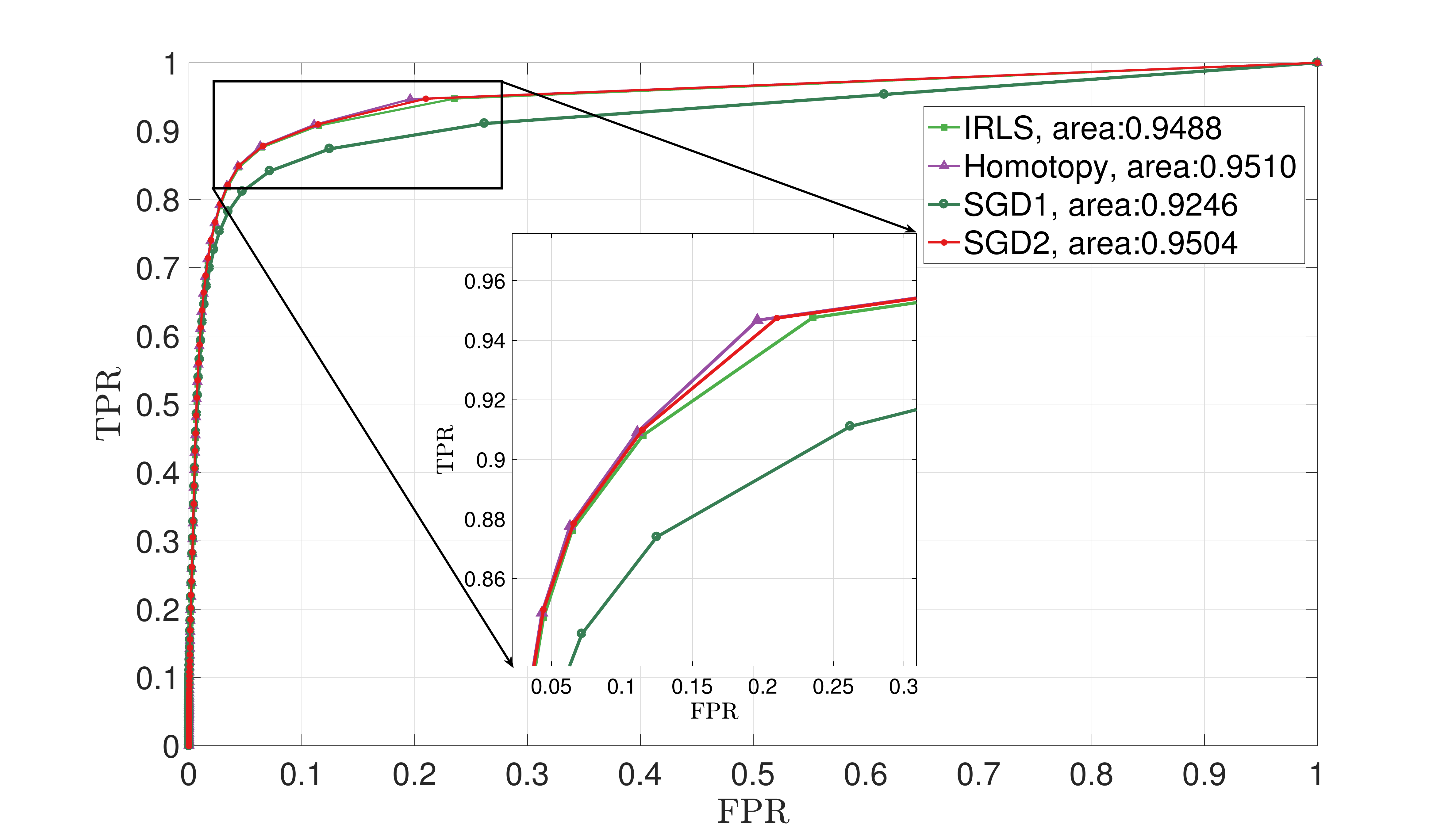}
    \caption{\small{ROC curve to compare between our proposed $\ell_1$ regression algorithms on {\tt Basic} video, frame size $144\times176$.}}
    \label{compare_methods}
\end{figure}
\paragraph{Decomposition.}  Let $A_2 = [a_1,\dots,a_n]$ and $S = [s_1,\dots,s_n]$, where $a_i \in \R^m$, $s_i \in \R^t$ for all $i\in [n]:=\{1,2,\dots,n\}$. Our model \eqref{dutta_richtarik_l1} can be decomposed into $n$ parts, one for each frame:
  \begin{equation}\label{dutta_richtarik_l1: decompose}
% \textstyle 
f(S)= \sum_{i=1}^n f_i(s_i), \quad  f_i(s_i) := \|a_i - Q s_i\|_{1},
 \end{equation}
 where $\|\cdot\|_1$ is the vector $\ell_1$ norm. Therefore,   \eqref{dutta_richtarik_l1} reduces to  $n$ small ($k
$-dimensional) and independent  {\em $\ell_1$ regression problems:}
\begin{equation}\label{dutta_richtarik_l1: decompose_problems}
\boxed{\min_{s_i\in \R^t} f_i(s_i), \qquad  i \in [n]}
 \end{equation}

\paragraph{Advantages of our model.}We now list some advantages of our model \eqref{dutta_richtarik_l1} as compared to \eqref{gfl_sfl_sparse}. We show that 1) our model does not involve the unnecessary sparsity inducing term $\|S'\|_1$, that  2) our model does not include the trade-off parameter $\lambda$ and hence issues with  tuning this parameter disappear,  that 3) our model involves a simple  $\ell_1$ norm as opposed to the more complicated GFL norm, that 4) the dimension of $S$ is smaller (and possibly much smaller) than that of $S'$, that 5) our objective is separable across the $n$ columns  of $S$ corresponding to frames, which means that we can solve for each column of $S$  in {\em parallel} (for instance on a GPU), and that 6) for the same reason, we can solve for each frame as it arrives, in an {\em online} fashion.

\paragraph{Further contributions.}
Our model works well with just a  few training background frames (e.g., $r=10$). This should be compared with the 200 training frames in GFL model. We propose 5 methods for solving the model, out of which 4 can work online and all 5 can work in a  batch mode. Our model solves all the following challenges: static and semi-static foreground, newly added static foreground, shadows that are already present in the background and newly created by moving foreground, occlusion and disocclusion of the static and dynamic foreground, the ghosting effect of the foreground in the background. To the best of our knowledge, no other  algorithm can solve all the above challenges in a single framework.

\section{Scalable Algorithms for L1 Regression}

The  separable (across frames) structure of our model allows us to devise both batch and online background estimation algorithms. To the best of our knowledge, this is the first formulation which can operate in both batch and online mode. Since our problem decomposes across frames $i\in [n]$, it suffices to describe algorithms for solving the $\ell_1$ regression problem \eqref{dutta_richtarik_l1: decompose_problems} for a single $i$. This problem has the form
\begin{equation}\label{dutta_richtarik_l1: simple}
 \min_{x\in \R^t} \phi(x) := \|Q x - b\|_1 = \sum_{j=1}^m |q_j^\top x - b_j|,
 \end{equation}
 where $x \in \R^t$ corresponds to one of the reconstruction  vectors $s_i$, and $b\in\R^m$ corresponds to the related frame $a_i$. We write $b = (b_1,\dots,b_m)\in \R^m$, and let $q_j \in \R^t$ be the  $j$th row of $Q$  for $j\in [m]$.
 \begin{figure}
    \centering
    \includegraphics[width=2.6in]{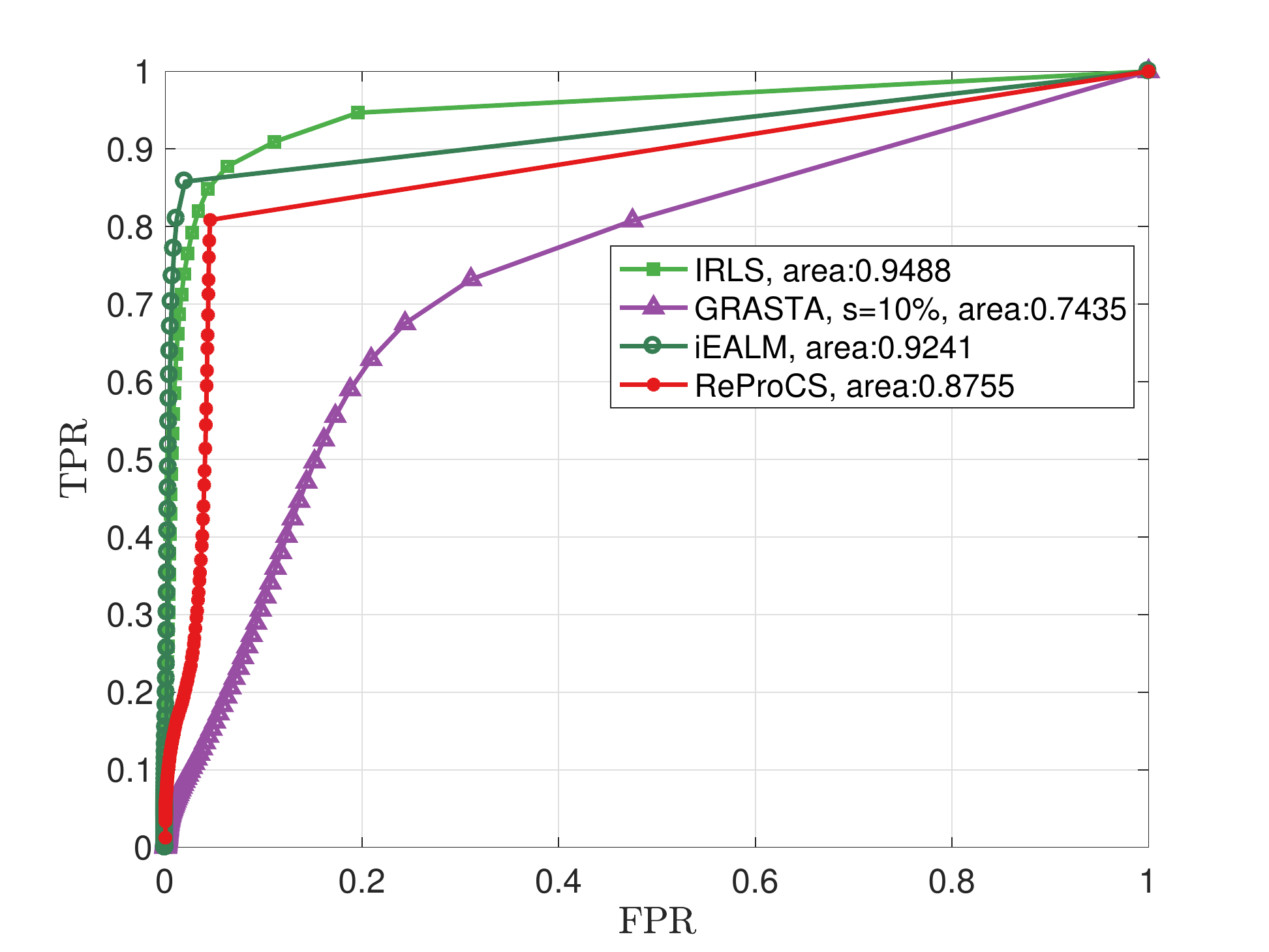}
    \caption{\small{ROC curve to compare between IRLS, iEALM, GRASTA, and ReProCS on {\tt Basic} video, frame size $144\times176$.}}
    \label{ROC_methods}
\end{figure}

\paragraph{Five methods.} In this work  we propose to solve \eqref{dutta_richtarik_l1: simple} via four  algorithms:~(a)~iteratively reweighted least squares~(IRLS), (b)~homotopy method,~(c)~stochastic subgradient descent (variant 1), (d)~stochastic subgradient descent (variant 2), and (e) Augmented Lagrangian Method of Multipliers~(ALM)~(see Appendix 2).
% , and~(e)~the augmented Lagrangian method of multipliers.  The last method is mentioned in the supplementary material.

The first four algorithms can be used in both batch and online setting and can deal with grayscale and color images. If we assume the camera is static, and assume constant illumination throughout the video sequence, then our online methods can provide a good estimate of the background. Moreover, all algorithms are robust to the intermittent object motion artifacts, that is, static foreground (whenever a foreground object stops moving for a few frames), which poses a big challenge to the state-of-the-art methods. Additionally, our online methods are fast as we perform neither conventional nor incremental principal component analysis~(PCA). In contrast, conventional PCA~\cite{pca} is an essential subproblem to numerically solve both RPCA and GFL problems. In these problems, each iteration involves computing  PCA, which operates at a cost~$\mathcal{O}(mn^2)$ and is due to SVD on a $m\times n$ matrix. We also recall that the state-of-the-art online, semi-online, or batch incremental algorithms, such as the Grassmannian robust adaptive subspace estimation~(GRASTA)~\cite{grasta}, recursive projected compressive sensing algorithm~(ReProCS)~\cite{reprocs,pracreprocs,modified_cs}, or incremental principal component pursuit~(incPCP)~\cite{incpcp,matlab_pcp,inpcp_jitter}, use either thin or partial PCA as well.

\paragraph{The need for simpler solvers for $\ell_1$ regression.} It is natural to ask: why do we need a new set of algorithms to solve the classical $\ell_1$ regression problem when there are several well known solvers, for example, CVX~\cite{cvx,gb08}, $\ell_1$ magic~\cite{l1magic}, and SparseLab 2.1-core~\cite{sparselab}? It turns out that a high resolution video sequence (characterized by very large $m$) is computationally extremely expensive for the above mentioned classic solvers. Moreover, we do not need highly accurate solutions. Hence, simple and scalable methods are preferable to more involved and computationally demanding methods. The $\ell_1$ magic software, for example, in our experiments took  126 minutes (on a computer with Intel i7 Processor and 16 GB memory) to estimate the background on the {\tt Waving Tree} dataset with $A_2\in\mathbb{R}^{19,200\times 66}$.  In contrast, our IRLS  method took 0.59 seconds only for 66 frames.

\begin{figure}
    \centering
    \includegraphics[width=2.5in, height = 2.0 in]{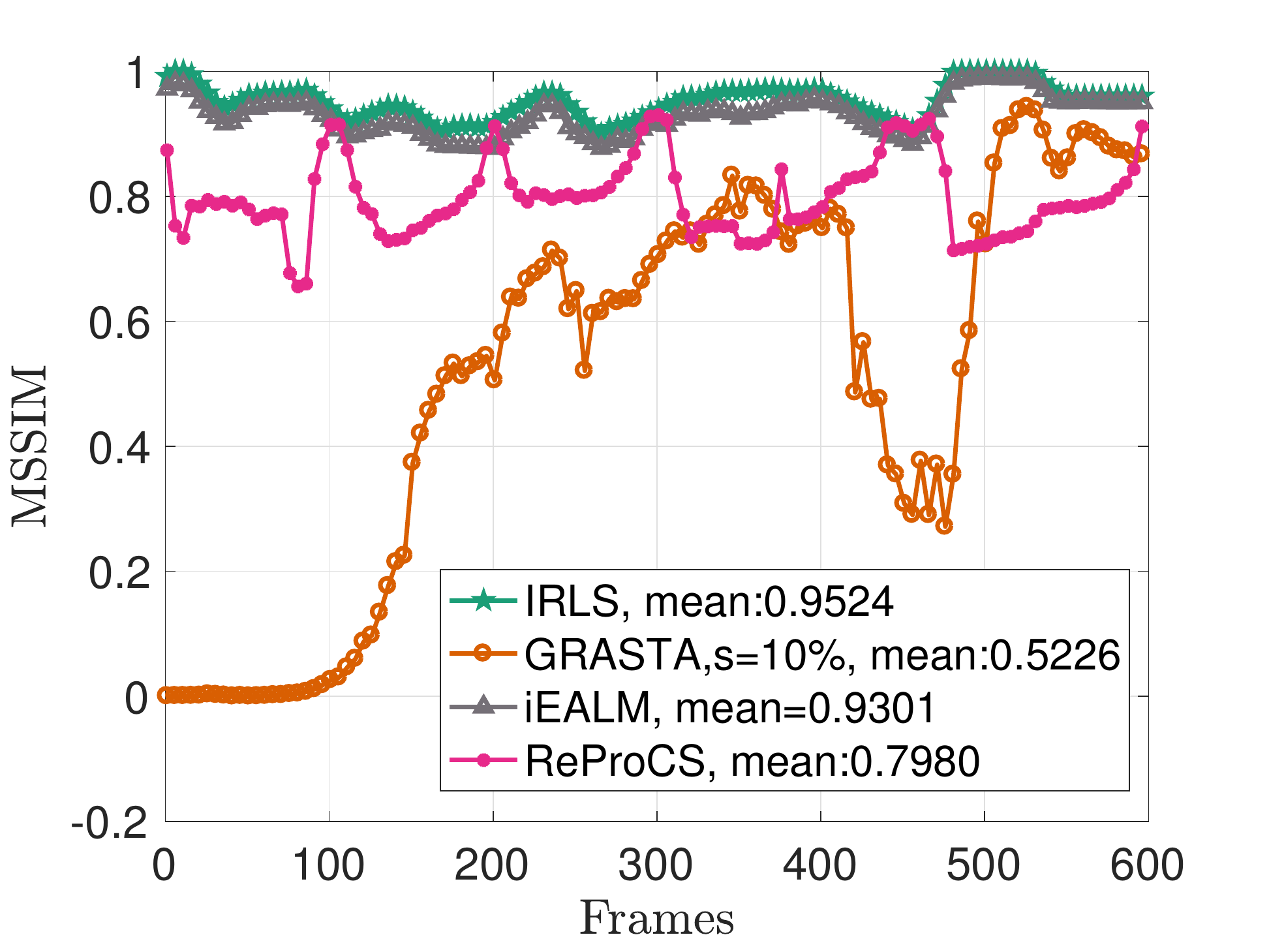}
    \caption{\small{Comparison of Mean SSIM~(MSSIM) of IRLS, iEALM, GRASTA, and ReProCS on~{\tt Basic} video.~IRLS has the best MSSIM.~To process 600 frames each of size $144\times176$, iEALM takes 164.03 seconds, GRASTA takes 20.25 seconds, ReProCS takes \textcolor{blue}{14.20} seconds, and our IRLS takes~\textcolor{red}{7.51} seconds.}}
    \label{ssim}
\end{figure}

\subsection{Iteratively Reweighted Least Squares~(IRLS)}\label{IRLS:sec}

In the past decade, IRLS  has been used in various domains, ranging from reconstruction of sparse signals from underdetermined systems, to the low-rank and sparse matrix minimization problems in face clustering, motion segmentation, filter design, automatic target detection, to mention just a few applications~\cite{richtarik_thesis, Burrus_homotopy, Candes08, ingrid, osborne, lu_irls,iirls}. We find that the IRLS algorithm is a good fit to solve~\eqref{dutta_richtarik_l1: simple}. Also,  each iteration of IRLS reduces to a single weighted $\ell_2$ regression problem for an over determined system. To the best of our knowledge, we are the first to use IRLS to propose a background estimation model. 

We now briefly describe IRLS for solving \eqref{dutta_richtarik_l1: simple}. First note that the cost function $f$ in  \eqref{dutta_richtarik_l1: simple} can be written in the form
\begin{equation}\label{dutta_richtarik_rls: decompose_problems}
\phi(x) = \sum_{j=1}^m | q_j^\top x - b_j | = \sum_{j=1}^m \frac{( q_j^\top x - b_j )^2 }{ | q_j^\top x - b_j| }.
 \end{equation}
For $x\in \R^m$ and $\delta>0$ define a diagonal weight matrix via 
$W_\delta(x):={\rm Diag}(1 / \max \{ | q_j^\top x - b_j | , \delta \} ).$ Given a current iterate $x_k$, we may fix the denominator in  \eqref{dutta_richtarik_rls: decompose_problems} by substituting $x_k$ for $x$, which makes $\phi$ dependent on $x$  via $x$ appearing in the numerator only. The problem of minimizing the resulting function in $x$
is a {\em weighted least squares problem}. The normal equations for this problem have the form 
\begin{equation}\label{dutta_richtarik_irls: 1}
Q^\top W_0(x_k) Q x=Q^\top W_0(x_k) b.
 \end{equation}
IRLS is obtained by setting $x_{k+1}$ to be equal to the solution of \eqref{dutta_richtarik_irls: 1}. For stability purposes, however, we shall use weight matrices $W_\delta(x_k)$ for some threshold parameter $\delta>0$ instead. This leads to the IRLS method:
 \begin{equation}\label{dutta_richtarik_irls: equation}
\boxed{x_{k+1}=(Q^\top W_\delta(x_k) Q)^{-1} Q^\top W_\delta (x_k ) b}
 \end{equation}
Osborne \cite{osborne} and more recently~\cite{sigl} performed a comprehensive analysis of the performance of IRLS for $\ell_p$ minimization with  $1<p<3$.

\subsection{Homotopy Method}

In this section we generalize the IRLS method \eqref{dutta_richtarik_irls: equation} by introducing a {\em homotopy}~\cite{Burrus_homotopy} parameter $1\leq p \leq 2$.  We set $p_0=2$ and choose $x_0 \in \R^t$ (in our experiments, random initialization will do). Consider the function
\[
%\textstyle 
\phi_p(x,y) := \sum_{j=1}^m \frac{(q_j^\top x - b_j)^2}{|q_j^\top y - b_j|^{2-p}}.\]
Note that $\phi_1(x,x) $ is identical to the $\ell_1$ regression function $\phi$ appearing in \eqref{dutta_richtarik_rls: decompose_problems}. Given current iterate $x_k$, consider function $\phi_{p_k}(x,x_k)$. This is a weighted least squares function of $x$. Our homotopy method is defined by setting
\[ x_{k+1} = \arg\min_x\phi_{p_k} (x, x_k),  \]
and  subsequently decreasing the homotopy parameter as $p_{k+1} = \max\{p_k \eta, 1\}$, where $0<\eta< 1$ is a constant reduction factor.

% For $p=1$, our new method reduces to IRLS. The homotopy trick will be to iteratively decrease $p$ from the initial value of $2$ down to $1$, and in each step to perform a step similar to one step of IRLS. 

As in the case of IRLS, the normal equations for the above problem have the form 
 \begin{equation}\label{dutta_richtarik_homotopy_matrix: decompose_problems}
Q^\top W_{0,p_k} (x_k) Q x= Q^\top W_{0,p_k} (x_k) b,
 \end{equation}
 where 
$W_{\delta,p}(x):={\rm Diag}(1 / \max \{ |  q_j^\top x - b_j |^{2-p} , \delta \} ).$
The  (stabilized) solution of \eqref{dutta_richtarik_homotopy_matrix: decompose_problems} is given by
\begin{equation} \boxed{x_{k+1} =    (Q^\top W_{\delta,p_k} (x_k) Q)^{-1}  Q^\top W_{0,p_k} (x_k) b } \label{alg:homotopy}\end{equation}

As mentioned above, one step of the homotopy scheme \eqref{alg:homotopy} is identical to one step of IRLS~\eqref{dutta_richtarik_irls: 1} when $p_k=1.$ In practice, however,  the homotopy method sometimes performs better~(see Figures~\ref{compare_methods},~\ref{Toscana}, and Table~\ref{sbi_quant}).  
\begin{figure}
    \centering
    \includegraphics[width = 0.5\textwidth]{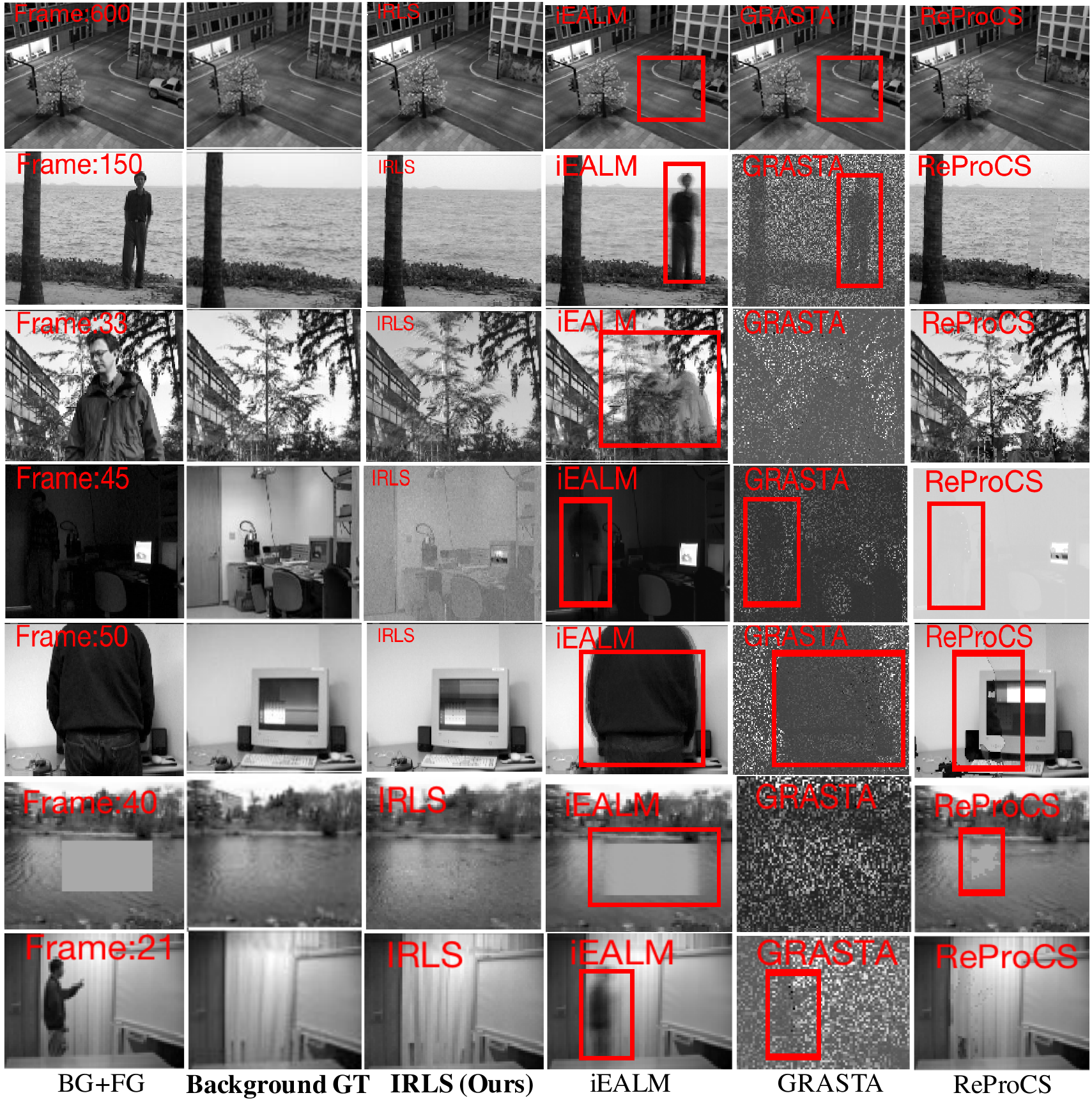}
    \caption{\small{Background recovered on Stuttgart, Wallflower, and I2R dataset.~Comparing with the ground truth~(second column), IRLS recovers the best quality background.}}
    \label{Qual_comparison}
\end{figure}
\begin{figure*}
    \centering
    \includegraphics[width =\textwidth]{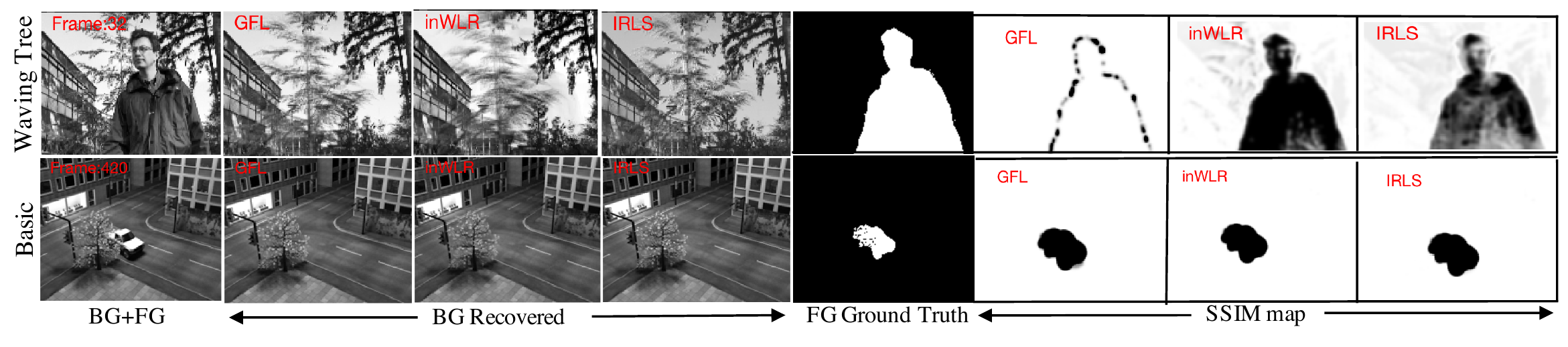}
    \caption{\small{Qualitative and Quantitative comparison with supervised GFL and inWLR.~GFL and IRLS construct better backgrounds on the {\tt Waving Tree} video.~On {\tt Basic} video all the methods have similar performance. However, supervised GFL takes 117.11 seconds and 6.25 seconds on {\tt Waving Tree} and {\tt Basic} video, respectively, to process 1 frame; whereas inWLR takes \textcolor{blue}{3.39} seconds and  \textcolor{blue}{17.83} seconds, respectively on those two sequences.~In contrast,~IRLS takes \textcolor{red}{0.59} seconds and \textcolor{red}{7.02} seconds, respectively and recovers the similar SSIM map.}}
    \label{Qual_comparison:GFL_inWLR}
\end{figure*}
\begin{figure*}
    \centering
    \includegraphics[width = \textwidth]{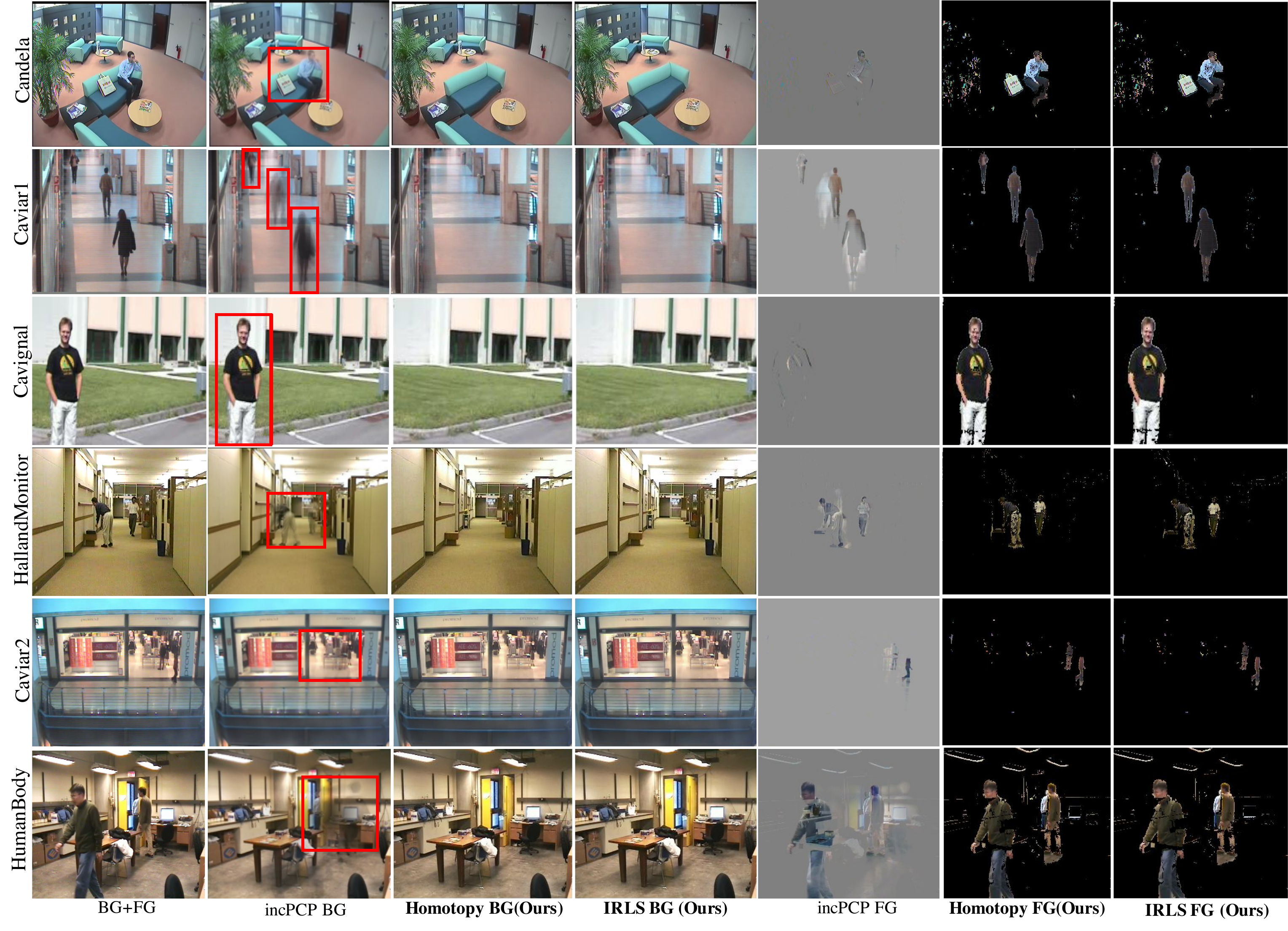}
    \caption{\small{Background and foreground recovered by online methods on SBI dataset.~The videos have static, semi-static foreground, newly added static foreground, shadows that already present in the background and newly created by moving foreground, and occlusion and disocclusion of static and dynamic foreground.~For a comprehensive review of the dataset we refer the readers to~\cite{sbi_a}.}}
    \label{SBI}
\end{figure*}
\begin{figure*}
    \centering
    \includegraphics[width = \textwidth]{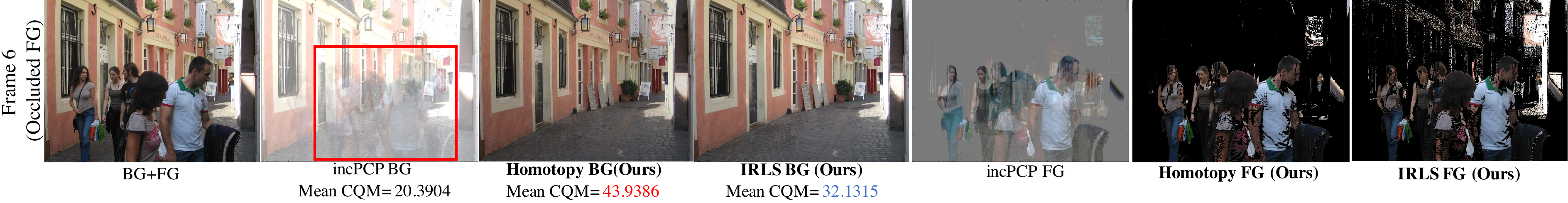}
    \caption{\small{Qualitative and Quantitative comparison on {\tt Toscana-HD} video.~Besides IRLS and Homotopy, the two best methods on {\tt Toscana}, that is, Photomontage~\cite{photomontage} and SOBS1~\cite{SOBS} have MSSIM 0.9616 and 0.9892 and CQM 50.2416 and 43.3002, respectively~\cite{boumans_taxonomy}.}}
    \label{Toscana}
\end{figure*}
\subsection{Stochastic Subgradient Descent}

In this section we propose the use of {\em two variants} of stochastic subgradient descent (SGD) to solve~\eqref{dutta_richtarik_l1: simple}:
\begin{equation} \label{eq:i098sh09s}
%\textstyle 
\min_{x\in \R^t} \phi(x) := \frac{1}{m}\sum_{j=1}^m \phi_j(x),\end{equation}
where $\phi_j(x) := m |q_j^\top x - b_j|$. Functions $\phi_j$ are convex, but not differentiable. However, they are subdifferentiable. A classical result from convex analysis says that the subdifferential of a sum of convex functions is the sum of the subdifferentials. Therefore, the subddiferential  $\partial \phi$ of $\phi$ is given by the formula
$\partial \phi(x) = \frac{1}{m} \sum_{j=1}^m \partial \phi_j(x).$
In particular, if we choose $j\in [m]$ uniformly at random, and pick  $g_j(x) \in \partial \phi_j(x)$, then ${\rm E} [g_j(x)] \in \partial \phi(x)$. That is, $g_j(x)$ is an unbiased estimator of a subgradient of $\phi$ at $x$.

A generic SGD method applied to \eqref{eq:i098sh09s} (or, equivalently, to \eqref{dutta_richtarik_l1: simple}) has the  form
\begin{equation}
\label{alg:SGD-generic} \boxed{x_{k+1} = x_k - \eta_k g_i(x_k)}
\end{equation}

%We start with the following Lemma. 
%\begin{lemma}\label{subdiff}\cite{nesterov:book}
%Let $f$ be a closed and convex function with ${\rm dom}f\subset\R^m$ and $\mathcal{A}(x)=Ax+b$ be a linear operator. Denote $\phi(x)=f(\mathcal{A}(x)):\R^n\to\R^m$ be a closed convex function with domain ${\rm dom}\phi=\{x:\mathcal{A}(x)\in{\rm dom} f\}.$ Then for any $x\in{\rm int}({\rm dom}\phi)$ we have
%$$
%\partial\phi(x)=A^T\partial f(\mathcal{A}(x)).
%$$
%\end{lemma}

 An easy calculation using the chain rule for subdifferentials of convex functions gives the following formula for $\partial\phi_j(x)=m q_j \partial | q_j^\top x-b_j|$  (see, for instance, \cite{nesterov:book}):
\begin{eqnarray}\label{partial}
\partial\phi_j(x)&=&\left\{
  \begin{array}{@{}ll@{}}
    m q_j, & \text{if}\ q_j^\top x-b_j >0 \\
    -m q_j, & \text{if}\ q_j^\top x- b_j <0 \\
    0, & \text{otherwise}
  \end{array}.\right.
\end{eqnarray}

%\begin{algorithm}
%\small{
%    \SetAlgoLined
%    \SetKwInOut{Input}{Input}
%    \SetKwInOut{Output}{Output}
%    \SetKwInOut{Init}{Initialize}
%    \nl\Input{$A=(A_1\;\;A_2) \in\mathbb{R}^{m\times (r+n)}$ (data matrix),~threshold $\epsilon>0$\;}
%    \nl\Init {$A_1=QR,S^{(0)}\in\mathbb{R}^{m\times n},\{\eta_k\}_{k=1}^{\infty},R>0$\;}
%    %\BlankLine
% \nl \For{$1\le i\le n$}
% {\nl \While{not converged}
%       {\nl Select $j$ randomly from $[1,m]$\;
%       \nl Compute $\frac{\partial \phi_j}{\|\partial \phi_j\|_2}$~using~\eqref{partial}\;
%       \nl $s_i^{(k+1)}=s_i^{(k)}-{\eta_k}\frac{\partial \phi_j}{\|\partial \phi_j\|_2}$\;
%        \nl $k=k+1$\;
%    }}
%    \nl \Output{$S^{(k+1)}.$}}
%    \caption{Stochastic Subgradient Descent~\cite{nesterov:book}}\label{SSD}
%\end{algorithm}

When $q_j^\top x_k-b_j$ is nonzero, each iterate of SGD moves in the direction of either vector $q_j$ or $-q_j$, with an appropriate stepsize. The initialization of the method (i.e., choice of $x_0\in \R^t$ and the learning rate parameters $\eta_k$) plays an important role in the convergence of the method. 

We consider two variants of SGD depending on the choice of $\eta_k$ and on the vector that we output. 
\begin{table}
\begin{center}
\begin{tabular}{|c|c|c|}
\hline
method & $\eta_k$ & output \\
\hline
SGD 1 & $\tfrac{R}{\sqrt{k} \|g_j(x_k)\|}$ & $x_k$ \\
\hline
SGD 2 & $\tfrac{B}{\rho \sqrt{K}}$ & $\hat{x}_K = \tfrac{1}{K}\sum_{k=0}^{K-1} x_k$ \\
\hline
\end{tabular}
\end{center}
\caption{\small Two variants of SGD.}
\label{tbl:SGD}
\end{table}
%If we do not specify the number of iterations of the inner loop in Algorithm~\ref{SSD}, then we choose the sequence $\{\eta_k\}_{k=1}^{\infty}$ such that $\eta_k>0$ and $\sum_k\eta_k$ is divergent. One such good choice of $\eta_k$ is $\eta_k=\frac{R}{\sqrt{k}}; R>0, k=1,2,\dots.$ 
In SGD 1 we always normalize each stochastic subgradient, and multiply the resulting vector by $R/\sqrt{k}$, where $k$ is the iteration counter, for some constant $R>0$ which needs to be tuned. This method is a direct extension of the subgradient descent method in \cite{nesterov:book}. The output is the last iterate. While we provide no theoretical guarantees for this method, it performs well in our experiments.~On the other hand, SGD 2 is a more principled method. This arises as a special case of the SGD mehod described and analyzed in \cite{MLbook}. In this method, one needs to decide on the number of iterations $K$ to be performed in advance. The method ultimately outputs the average of the iterates. The stepsize $\eta_k$ is set to $B/\rho \sqrt{K}$, where $B>0$ and $\rho>0$ are parameters the value of which can be derived from the following iteration complexity result:
\begin{theorem}[\cite{MLbook}] Let  $x_*$ be a solution of \eqref{eq:i098sh09s} and let $B>0$ be such that 
$ \|x_*\|\leq B$. Further, assume that $\|g_j(x)\|\leq \rho$ for all $x\in \R^t$ and $j\in [m]$. If SGD 2 runs for $K$ iterations with $\eta =\frac{B}{\rho\sqrt{T}}$, then 
$
{\rm E}[\phi(\hat{x}_K)]-\phi(x_*)\leq \frac{B\rho}{\sqrt{K}},
$
where $
\hat{x}_K$ is given as in Table~\ref{tbl:SGD}. Moreover, for any $\epsilon>0$ to achieve ${\rm E}[\phi(\hat{x}_K)]-\phi(x_*)\leq \epsilon$ it suffices to run SGD 2 for $K$ iterations where $K\geq \frac{B^2\rho^2}{\epsilon^2}.$
\end{theorem}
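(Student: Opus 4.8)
The plan is to follow the standard potential-function (or ``descent lemma'') analysis for stochastic subgradient descent, using the squared distance $\|x_k - x_*\|^2$ to the optimum as the Lyapunov quantity. Writing $g_k := g_{j_k}(x_k)$ for the stochastic subgradient sampled at iteration $k$, the first step is to expand the one-step recursion induced by the update \eqref{alg:SGD-generic} with constant stepsize $\eta$:
\begin{equation}
\|x_{k+1}-x_*\|^2 = \|x_k - x_*\|^2 - 2\eta\langle g_k, x_k - x_*\rangle + \eta^2\|g_k\|^2.
\end{equation}
Rearranging isolates the inner product $\langle g_k, x_k - x_*\rangle$ in terms of a telescoping difference of squared distances plus a stepsize-weighted gradient-norm term.

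The second step couples the randomness with convexity. Conditioning on $x_k$ and using the unbiasedness established earlier in the text---namely ${\rm E}[g_k\mid x_k]\in\partial\phi(x_k)$---together with the subgradient inequality $\phi(x_*)\geq \phi(x_k)+\langle {\rm E}[g_k\mid x_k], x_*-x_k\rangle$, I obtain ${\rm E}[\langle g_k, x_k-x_*\rangle \mid x_k]\geq \phi(x_k)-\phi(x_*)$. Taking full expectations, summing the rearranged recursion over $k=0,\dots,K-1$, telescoping the distance terms, discarding the nonnegative final distance $-{\rm E}\|x_K-x_*\|^2\le 0$, and bounding $\|g_k\|\le\rho$ yields
\begin{equation}
\sum_{k=0}^{K-1}\bigl({\rm E}[\phi(x_k)]-\phi(x_*)\bigr) \leq \frac{\|x_0-x_*\|^2}{2\eta} + \frac{\eta K\rho^2}{2}.
\end{equation}
Dividing by $K$ and invoking Jensen's inequality, $\phi(\hat{x}_K)\le \tfrac{1}{K}\sum_{k=0}^{K-1}\phi(x_k)$ (valid because $\phi$ is convex and $\hat{x}_K$ is the average of the iterates), converts this into a bound on the averaged output ${\rm E}[\phi(\hat{x}_K)]-\phi(x_*)$.

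Finally I would initialize at $x_0=0$ so that $\|x_0-x_*\|=\|x_*\|\le B$, giving the bound $B^2/(2\eta K)+\eta\rho^2/2$. Substituting the prescribed stepsize $\eta=B/(\rho\sqrt{K})$ balances the two terms---each equal to $B\rho/(2\sqrt{K})$---and produces the claimed rate ${\rm E}[\phi(\hat{x}_K)]-\phi(x_*)\le B\rho/\sqrt{K}$; requiring this to be at most $\epsilon$ then gives $K\ge B^2\rho^2/\epsilon^2$. The computation is otherwise routine; the only genuine subtlety is the careful bookkeeping of conditional versus full expectations, so that the unbiased-subgradient property can be combined with the convexity inequality \emph{before} summation, and checking that the two hypotheses $\|x_*\|\le B$ and $\|g_j(x)\|\le\rho$ are precisely what is needed to control the two resulting terms.
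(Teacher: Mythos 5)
Your proof is correct and is precisely the standard argument behind the cited result (the paper itself gives no proof, deferring entirely to \cite{MLbook}): the squared-distance recursion, unbiasedness of the stochastic subgradient combined with the convexity inequality under conditional expectation, telescoping, the bound $\|g_k\|\le\rho$, Jensen's inequality for the averaged iterate, and the balancing stepsize all match, and your constants work out to exactly $B\rho/\sqrt{K}$. You also correctly supplied the two details the theorem statement glosses over: the initialization $x_0=0$, without which the hypothesis $\|x_*\|\le B$ does not control $\|x_0-x_*\|$, and the fact that the stepsize should read $\eta=B/(\rho\sqrt{K})$ rather than $B/(\rho\sqrt{T})$.
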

\begin{table*}
\footnotesize
\begin{center}
\begin{tabular}{|l|c|c|c|}
\hline
Dataset & Video & No. of frames & Resolution\\
\hline
 Stuttgart~\cite{cvpr11brutzer} & {\tt Basic}~(Grayscale) & 600 & $144\times176$  \\
%\hline
 &  {\tt Basic}~(RGB-HD) & 600 & $600\times 800$ \\
 &  {\tt Lightswitch}~(RGB-HD) & 600 & $600\times 800$ \\
%\hline
SBI~\cite{sbi_a}& {\tt IBMTest2}~(RGB) &91& $320\times240$ \\
%\hline
& {\tt Candela}~(RGB) &351 & $352\times 288$\\
& {\tt Caviar1}~(RGB) & 610&$384\times 288$  \\
%\hline
& {\tt Caviar2}~(RGB) &461 &$384\times 288$  \\
%\hline
 &{\tt Cavignal}~(RGB)& 258 & $200\times 136$  \\
%\hline
 & {\tt HumanBody}~(RGB) & 741 & $320\times 240$ \\
%\hline
& {\tt HallandMonitor}~(RGB)& 296& $352\times 240$ \\
%\hline
 & {\tt Highway1}~(RGB) & 440 & $320\times 240$  \\
%\hline
& {\tt Highway2}~(RGB) & 500 & $320\times 240$  \\
%\hline
%\hline
&{\tt Toscana}(RGB-HD) &6 & $600\times 800$ \\
Wallflower~\cite{wallflower} &{\tt Waving Tree}(Grayscale) &66 & $120\times 160$ \\
&{\tt Camouflage} (Grayscale) &52 & $120\times 160$ \\
I2R/Li dataset~\cite{lidata} & {\tt Meeting Room}(Grayscale) &1209 & $64\times 80$ \\
&{\tt Watersurface} (Grayscale) &162 & $128\times 160$ \\
&{\tt Lightswitch}(Grayscale) &1430 & $120\times 160$ \\
&{\tt Lake}(Grayscale) &80 & $72\times 90$ \\
\hline
\end{tabular}
\end{center}
\caption{\small{Data used in this paper.}}\label{dataset}
\end{table*}
\begin{table*}[!h]
\footnotesize
\begin{center}
\begin{tabular}{|l|c|c|c|c|}
\hline
Algorithm & Abbreviation & Appearing in Experiment & Reference\\
\hline
Iterative Reweighted Laast Squares& IRLS & Figure \ref{compare_methods}--\ref{Mean_2},and Table\ref{sbi_quant},\ref{time}& This paper\\
Homotopy & Homotopy& Figure ~\ref{compare_methods},\ref{SBI}--\ref{Mean_2}, and Table \ref{sbi_quant}, \ref{time}& This paper\\
Stochastic Subgradient Descent 1& SGD 1 & Figure \ref{compare_methods} and Table \ref{tbl:SGD}& This paper\\
Stochastic Subgradient Descent 2& SGD 2 & Figure \ref{compare_methods} and Table \ref{tbl:SGD}& This paper\\
Inexact Augmented Lagrange Method of Multipliers& iEALM &Figure \ref{ROC_methods}-\ref{Qual_comparison}&\cite{LinChenMa}  \\
%\hline
Supervised Generalized Fused Lasso & GFL & Figure \ref{Qual_comparison:GFL_inWLR}, \ref{Stuttgart}& \cite{xin2015} \\
%\hline
 Grassmannian Robust Adaptive Subspace
Tracking Algorithm & GRASTA&Figure~\ref{ROC_methods}--\ref{Qual_comparison} & \cite{grasta} \\
%\hline
Recurssive Projected Compressive Sensing & ReProCS &Figure~\ref{ROC_methods}--\ref{Qual_comparison} &\cite{reprocs,pracreprocs,modified_cs}  \\
%\hline
Incremental Weighted Low-Rank & inWLR&Figure \ref{Qual_comparison:GFL_inWLR}& \cite{inWLR}\\
Incremental Principal Component Pursuit & incPCP & Figure \ref{SBI}--\ref{Mean_2}, and Table \ref{sbi_quant}, \ref{time} &\cite{incpcp,matlab_pcp,inpcp_jitter}\\
%\hline
Background estimated by weightless neural networks & BEWIS&Table \ref{sbi_quant}&\cite{BEWIS}   \\
%\hline
Independent Multimodal Background Subtraction Multi-Thread & IMBS-MT &Table \ref{sbi_quant}& \cite{IMBS} \\
%\hline
RSL2011& - &Table \ref{sbi_quant}& \cite{RSL} \\
%\hline
Color Median&-&Table \ref{sbi_quant}&\cite{colormedian}\\
%\hline
Photomontage &-&Table \ref{sbi_quant}& \cite{photomontage} \\
%\hline
Self-Organizing Background Subtraction1 & SOBS1 &Table \ref{sbi_quant}& \cite{SOBS}\\
\hline
\end{tabular}
\end{center}
\caption{\small{Algorithms compared in this paper.}}\label{algo}
\end{table*}

\section{Numerical Experiments}
To validate the robustness of our proposed algorithms, we tested them on some challenging real world and synthetic video sequences containing occlusion, dynamic background, static, and semi-static foreground.~For this purpose, we extensively use 19 gray scale and RGB videos from the Stuttgart, I2R, Wallflower, and the SBI dataset~\cite{cvpr11brutzer,lidata,sbi_a, SBI_web, wallflower}. .~We refer the readers to Table \ref{dataset} to get an overall idea of the number of frames of each video sequence used, video type, and resolution. 

For quantitative measure, we use the receiver operating characteristic~(ROC) curve, recall and precision~(RP) curve, the structural similarity index (SSIM), SSIM map~\cite{mssim}, multi-scale structural similarity index (MSSSIM)~\cite{msssim}, and~color image quality measure~(CQM)~\cite{boumans_taxonomy,cqm}.~Due to the availability of ground truth~(GT) frames, we use the Stuttgart artificial dataset~(has foreground GT) and the SBI dataset~(have background GTs) to analyze the results quantitatively and qualitatively.~To calculate the average computational time we ran each algorithm five times on the same dataset and compute the average.
%The numerical experiments were performed on  a computer with 2.9 GHz Intel Core i7 processor and 16 GB 2133 MHz memory.
Throughout this section, the best and the $2^{\rm nd}$ best results are colored with \textcolor{red}{red} and \textcolor{blue}{blue}, respectively.

\subsection{Comparison between our proposed algorithms}
First we compare the performance of our proposed algorithms in batch mode on the {\tt Basic} scenario.~Figure~\ref{compare_methods} shows that all four algorithms are are very competitive and we note that IRLS has the least computational time.~We ran each of IRLS and Homotopy method for five iterations, and SGD 1 and SGD 2 for 5000 iterations.~IRLS takes \textcolor{red}{7.02} seconds, Homotopy takes \textcolor{blue}{8.47} seconds, SGD 1 takes 17.81 seconds, and SGD 2 takes 17.67 seconds.~We mention that the choice of $R$ in SGD 1 and $B$ and $\rho$ in SGD 2 are problem specific.~Due to computational efficiency, we compare IRLS $\ell_1$ with other batch methods in the next section.
\begin{table*}
\footnotesize
\begin{center}
\begin{tabular}{|l|c|c|c|c|c|c|c|c|}
\hline
Video & SOBS1  &RSL2011& IMBS-MT &BEWIS & Color Median &IRLS &Homotopy & incPCP\\
\hline
 {\tt IBMTest2}&\textcolor{red}{\bf 0.9954} &0.9303&0.9721&0.9602& 0.9939&0.9950 &\textcolor{blue}{\bf 0.9953}&0.9670\\
%\hline
{\tt Candela} &0.9775 &{0.9916} & 0.9893 & 0.9852& 0.9382& \textcolor{red}{\bf 0.9995} & \textcolor{blue}{\bf 0.9992} &0.9412\\
%\hline
{\tt Caviar1}& 0.9781 &0.9947 &0.9967 &0.9813& 0.9918& \textcolor{red}{\bf 0.9994} & \textcolor{blue}{\bf 0.9993}&0.8649\\
%\hline
{\tt Caviar2} & 0.9994 &0.9962  &0.9986  &0.9994 & 0.9994 & \textcolor{red}{\bf 0.9999} & \textcolor{blue}{\bf 0.9998} &0.9935\\
%\hline
{\tt Cavignal} &0.9947 &0.9973 &0.9982 & \textcolor{blue}{\bf 0.9984} & 0.7984 & \textcolor{red}{\bf 0.9989} &0.9975 &0.8312\\
%\hline
{\tt HumanBody}&0.9980 &0.9959  &0.9958 &0.9866 & 0.9970 & \textcolor{red}{\bf 0.9996} & \textcolor{blue}{\bf 0.9990} &0.9360\\
%\hline
{\tt HallandMonitor} &0.9832 &0.9377   &0.9954 &0.9626    &0.9640     &  \textcolor{blue}{\bf 0.9991} &  \textcolor{red}{\bf 0.9992}     &0.9355\\
%\hline
{\tt Highway1} &0.9968 &0.9899      &0.9939     &0.9886       & 0.9924   & \textcolor{blue}{\bf 0.9980}      & \textcolor{red}{\bf 0.9985}     &0.8847\\
%\hline
{\tt Highway2 }&0.9991 &0.9907    &0.9960       &0.9942  & 0.9961    &\textcolor{blue}{\bf 0.9994}  &\textcolor{red}{\bf 0.9997}     &0.9819\\
{\tt Toscana} & 0.9616 & 0.0662 & 0.8903 & 0.8878 &0.8707 & \textcolor{blue}{\bf  0.9853} & \textcolor{red}{\bf 0.9996} & 0.8416\\
\hline
{\bf Average} &0.9814 &0.9491   &0.9929       &0.9745  & 0.9542   &\textcolor{blue}{\bf 0.9975}  &\textcolor{red}{\bf 0.9987}     &0.9177\\
\hline
\end{tabular}
\end{center}
\caption{\small{Comparison of average MSSSIM of the different methods on SBI dataset. Source:~\cite{SBI_web, boumans_taxonomy,sbi_a}.}}\label{sbi_quant}
\end{table*}

\subsection{Comparison with RPCA,~GFL, and other state-of-the-art methods}

In this section we compare IRLS with other state-of-the-art batch background estimation methods, such as, iEALM~\cite{LinChenMa} of RPCA, GRASTA, and ReProCS on the {\tt Basic} scenario.~Figure~\ref{ROC_methods} shows that IRLS sweeps the maximum area under the ROC curve.~Additionally, in Figure~\ref{ssim} IRLS has the best mean SSIM~(MSSIM) among all other methods.~Moreover, in batch mode, IRLS takes the least computational time.

Next in Figure~\ref{Qual_comparison} we present the background recovered by each method on Stuttgart, Wallflower, and I2R dataset. The video sequences have occlusion, dynamic background, and static foreground. IRLS can detect the static foreground and also robust to sudden illumination changes. 
\begin{figure*}
    \centering
    \includegraphics[width = \textwidth]{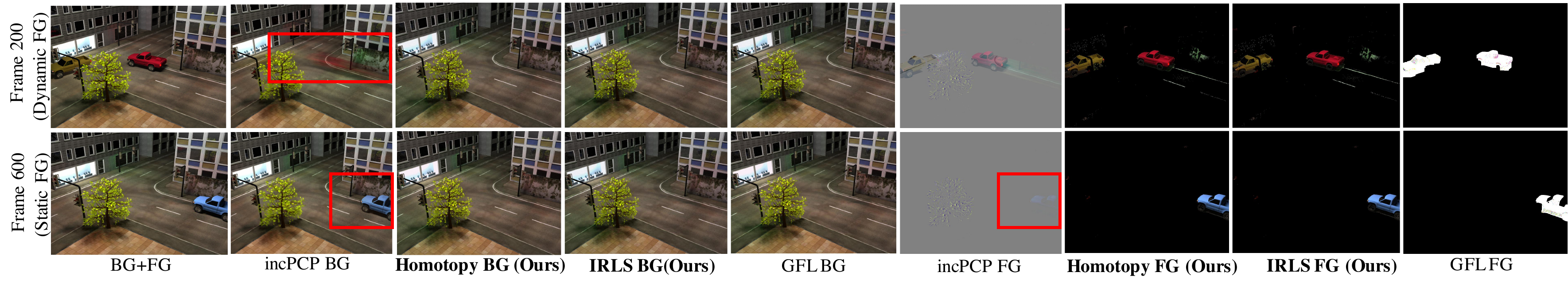}
    \caption{\small{Qualitative comparison on {\tt Basic} scenario HD scene.~The {\bf SSIM}s are~(the $1^{\rm st}$ number indicates frame 200 and the $2^{\rm nd}$ number indicates frame 600):~incPCP 0.021 and 0.0173, IRLS 0.9089 and \textcolor{red}{0.9731}, homotopy \textcolor{blue}{0.9327} and \textcolor{blue}{0.9705}, GFL \textcolor{red}{0.9705} and 0.9310.~The {\bf MSSSIM}s are:~incPCP 0.6315 and 0.4208, IRLS 0.8777 and \textcolor{red}{0.9746}, homotopy \textcolor{blue}{0.9166} and \textcolor{blue}{0.9725}, GFL \textcolor{red}{0.9175} and 0.9645.}}
    \label{Stuttgart}
\end{figure*}
\begin{figure*}
    \centering
    \includegraphics[width = \textwidth]{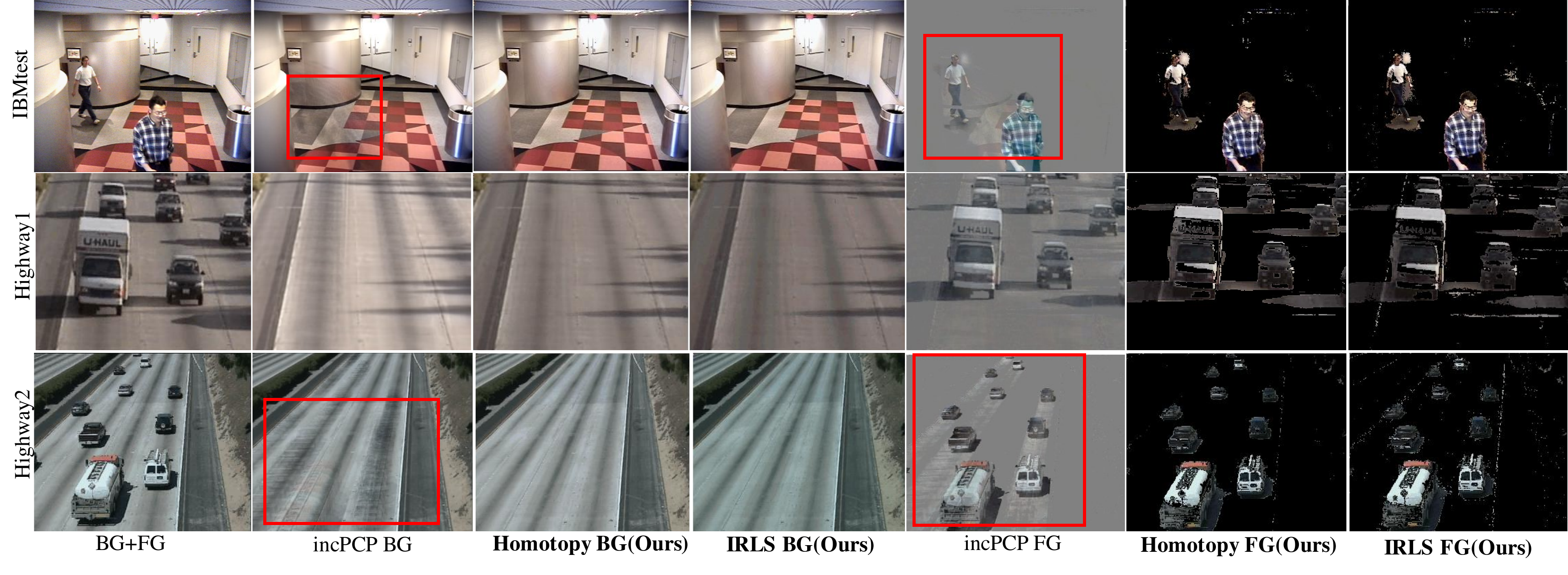}
    \caption{\small{Background and foreground recovered by online methods on SBI dataset. The videos have shadows that already present in the background and newly created by moving foreground, occlusion and disocclusion of dynamic foreground.}}
    \label{SBI_2}
\end{figure*}
\begin{figure*}
    \centering
    \includegraphics[width = \textwidth]{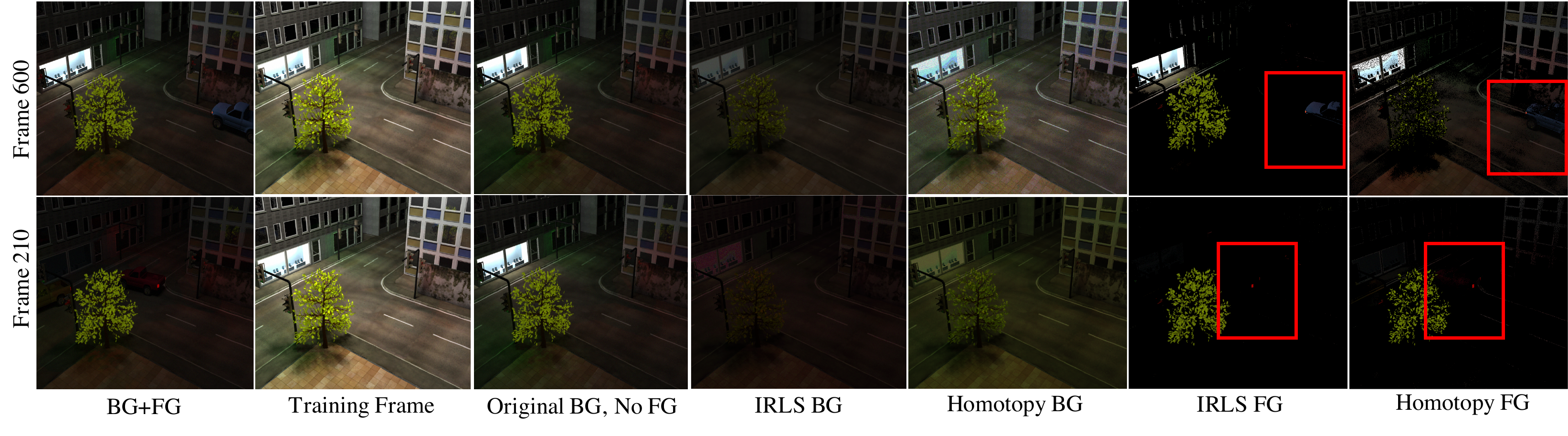}
    \caption{\small{Background and foreground recovered by our proposed online methods on {\tt Lightswitch} video. Both IRLS and homotopy captures the effect of change in illumination, irregular movements of the tree leaves, and reflections. Comparing with the No FG image both of our proposed method do pretty well.}}
    \label{LS}
\end{figure*}
\begin{figure}
\centering
\includegraphics[width=0.45\textwidth]{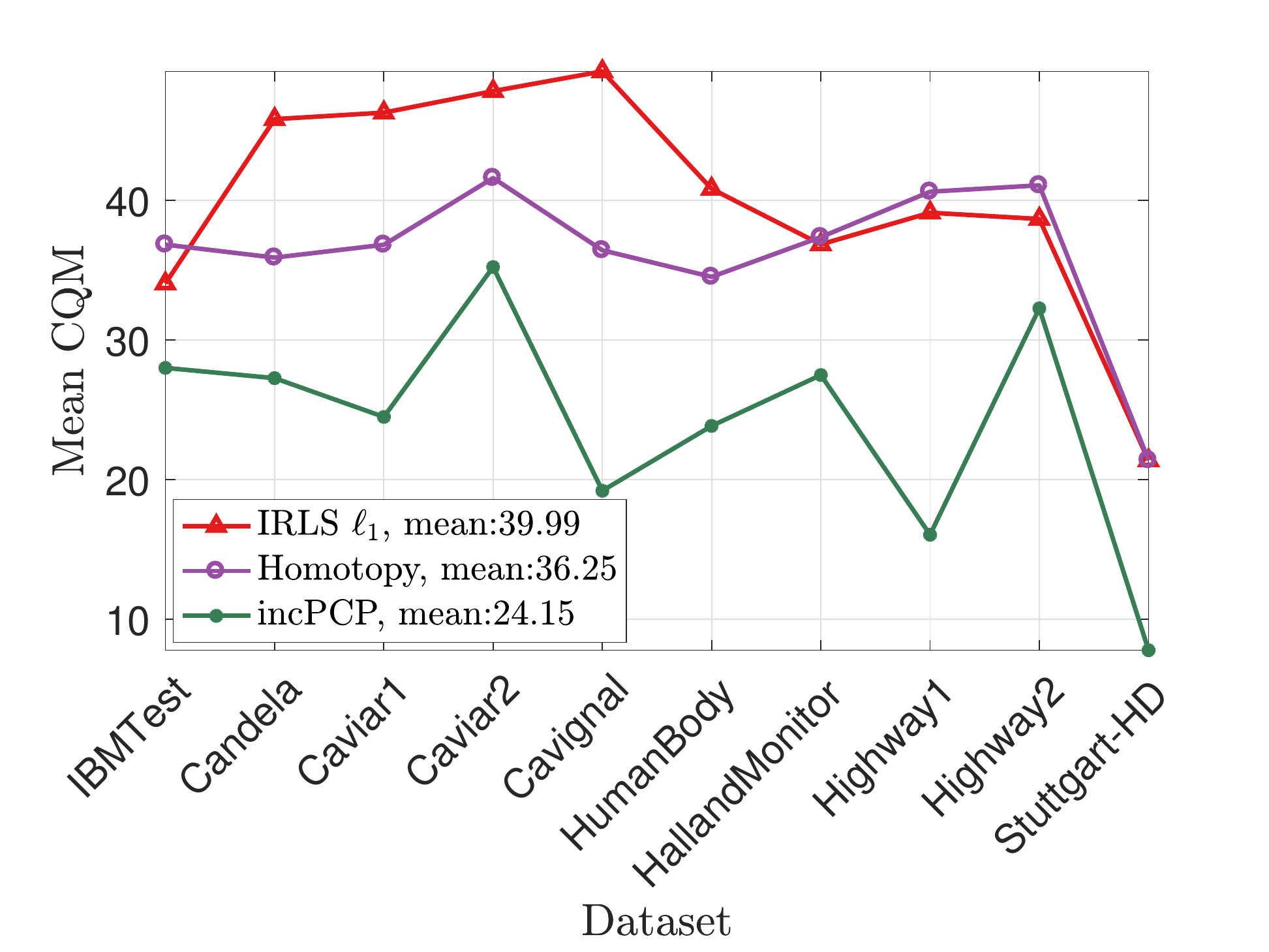}%
\caption{\small{Mean CQM of online methods on SBI dataset and {\tt Basic-HD} video.~The higher the CQM value, the better is the recovered image.}}%
 \label{Mean_2}
\end{figure}

Finally, we compare our IRLS with the supervised GFL model of Xin \etal~\cite{xin2015} and inWLR of Dutta \etal.~\cite{inWLR}~(see Figure~\ref{Qual_comparison:GFL_inWLR}).~For {\tt Waving Tree} scenario, supervised GFL uses 200 training frames and it takes 117.11 seconds to compute the background and foreground from one training frame and the ssim of the FG is \textcolor{red}{0.9996}.~inWLR does not use any training frames and takes \textcolor{blue}{3.39} seconds to compute the background and foreground from the video sequence that consists of 66 frames and the MSSIM is \textcolor{blue}{0.9592}. In contrast, IRLS uses 15 training frames and takes \textcolor{red}{0.59} seconds to process the entire video with an MSSIM 0.9398. For {\tt Basic} scenario, supervised GFL again uses 200 training frames and  takes 6.25 seconds to process one training frame and the ssim of the FG is {0.9462}.~inWLR does not require any training frame and takes \textcolor{blue}{17.83} seconds to process 600 frames in a batch-incremental mode and the MSSIM is \textcolor{blue}{0.9463}. In contrast, IRLS uses only 15 training frames and takes \textcolor{red}{7.02} seconds to process the entire video and the MSSIM is \textcolor{red}{0.9524}.

\subsection{Online implementation on RGB videos}
In this section we show the robustness of two of our algorithms on RGB videos in online mode.~Due to the space limitation we only provide results on IRLS and homotopy algorithm~(these two methods were also the fastest in the batch mode).~Primarily, we compare our results with incPCP and GFL~\cite{incpcp,matlab_pcp,inpcp_jitter,xin2015}.~We should mention that besides incPCP, probabilistic robust matrix factorization~(PRMF)~\cite{prmf} and RPCA bilinear projection~(RPCA-BL)~\cite{RPCA-BL} has online extensions.~However, PRMF uses the entire available data in its batch normalization step and there is no available implementation of online RPCA-BL.~To the best of our knowledge incPCP is the only state-of-the-art online method which deals with HD RGB videos in full online mode.~The incPCP code is downloaded from the author's website\footnote{https://sites.google.com/a/istec.net/prodrig/Home/en/pubs/incpcp}.~As mentioned in the software package we use the standard PCP~(fixed camera) mode for incPCP~\cite{incpcp,matlab_pcp} implementation.

\paragraph{Discussions.}We use {\tt Basic-HD} and the SBI dataset to provide extensive qualitative and quantitative comparison. The online mode of our algorithm only uses the available pure background frames to learn the basis $Q$ for each color channel and then operate on each test frame in a complete online mode.~Note that we only use 10 training frames and we strongly believe that one can use even less number of training frames to obtain almost the similar performance.~Homotopy uses less iterations than IRLS to produce a comparable background and hence it is faster than IRLS in online mode.~In Figure~\ref{SBI} and \ref{SBI_2} we compare IRLS and homotopy against incPCP on the SBI dataset.~Compare to the ghosting appearances in the incPCP backgrounds, our online methods construct a clean background for each video sequence.~We also removed the static foreground, occluded foreground, and the foreground shadows.~In Figure~\ref{Toscana} and~\ref{Stuttgart} we show our performance on HD video sequences.~In addition to incPCP, we compared with supervised GFL on the {\tt Basic-HD}~(see Figure~\ref{Stuttgart}).~Supervised GFL uses 200 training frames~(the average processing time of the training frames is 7.31 seconds) and takes 431.78 seconds to process each test frame and produce a comparable quantitative result as online IRLS and homotopy.~For computational time comparison with incPCP we refer to Table~\ref{time}.
Finally we provide the results of online IRLS and homotopy on one of the most challenging HD video sequences, that is, {\tt Lighswitch} of the Stuttgart dataset. This scenario is a nighttime scenario and has varying illumination effects throughout the video sequence. Starting from frame 125 the illumination suddenly changes. Additionally, it has reflections, traffic light change, and movements of the tree leaves. We used 10 daytime pure background frames for training purpose and by using them we estimated the nighttime scene. As expected in Figure~\ref{LS} both IRLS and homotopy perform pretty well with the changing illumination which can be verified from the pure {\tt Lightswitch} BG frame (Figure~\ref{LS} third column). 
Additionally, we compare our quantitative results against other state-of-the-art algorithms, such as, the adaptive neural background algorithm aka Self-Organizing Background Subtraction1~(SOBS1)~\cite{SOBS}, Photomontage~\cite{photomontage}, Color Median,~RSL2011~\cite{RSL}, Independent Multimodal Background Subtraction Multi-Thread~(IMBS-MT)~\cite{IMBS}, background estimated by weightless neural networks~(BEWIS)~\cite{BEWIS} on SBI dataset.~We refer~Table~\ref{sbi_quant}~(Source:~\cite{boumans_taxonomy}) and~Figure~\ref{Toscana}.~Finally in Figure~\ref{Mean_2} we provide the mean CQM of the online methods on SBI dataset and {\tt Basic-HD} video. In online mode, IRLS and Homotopy outperform incPCP in mean CQM and mean MSSSIM in each video. 
\section{Conclusion}
We proposed a novel and fast model for supervised video background estimation. Moreover, it is robust to several background estimation challenges.~We used the simple and well-known $\ell_1$ regression technique and provided several online and batch background estimation methods that can process high resolution videos accurately. Our extensive qualitative and quantitative comparison on real and synthetic video sequences demonstrated that our supervised model outperforms the state-of-the-art online and batch methods in almost all cases.

\section{Appendix 1: Historical Comments}

We start by making a connection between the supervised GFL model proposed by Xin \etal~\cite{xin2015} and the constrained low-rank approximation problem of Golub \etal~\cite{golub}.
\subsection{Golub's constrained low-rank approximation problem}
In 1987, Golub et al.~\cite{golub} formulated the following constrained low-rank approximation problem: Given $A=[A_1\;A_2]\in\mathbb{R}^{m\times n'}$ with $A_1\in\mathbb{R}^{m\times r}$ and $A_2\in\mathbb{R}^{m\times n}$, find $A_G=[\tilde{B}_1\;\tilde{B}_2]$ such that, $\tilde{B}_1 \in\mathbb{R}^{m\times r},\tilde{B}_2\in\mathbb{R}^{m\times n}$, solve:
\begin{eqnarray}\label{golub's problem}
[\tilde{B}_1\;\tilde{B}_2]=\arg\min_{\substack{B=[B_1\;B_2]\\B_1=A_1\\{\rm rank}(B)\le r}}\|A-B\|_F^2,
\end{eqnarray}
where $\|\cdot\|_F$ denotes the Frobenius norm of matrices.~Motivated by \cite{golub}, Dutta \etal recently proposed more general weighted low-rank~(WLR) approximation problems and showed their application in the background estimation problem \cite{duttali_acl,duttali,duttali_bg}. 

\paragraph{Connection with \eqref{golub's problem}.}
Recall that the background estimation model the generalized fused lasso~(GFL) proposed by Xin \etal~\cite{xin2015} with the choice $f_{\rm rank}(B) = {\rm rank}(B)$ and $f_{\rm spar}(F) = \lambda \|F\|_{\rm GFL}$ can be written as:  
\begin{align*}
\min_{B}{\rm rank}(B)+\lambda\|A-B\|_{\rm GFL}.
\end{align*}
In this model, $\|\cdot\|_{GFL}$ is the ``generalized fused lasso'' norm. With the extra assumption that ${\rm rank}(B)={\rm rank}(B_1)$ and by using the $\|\cdot\|_{\rm GFL}$ norm, problem~\eqref{gfl} is a constrained low rank approximation problem as in~\eqref{golub's problem} and can be written as follows:
\begin{equation*}\label{gfl_golub}
\boxed{\min_{B = [B_1\;B_2]}\{\|A-B\|_{\rm GFL} \; \text{subject to}\; {\rm rank}(B)\le r, B_1=A_1\}.}
\end{equation*}

\begin{table}
\footnotesize
\begin{center}
\begin{tabular}{|l|c|c|c|c|}
\hline
Video~(No. of frames) & IRLS & Homotopy & incPCP\\
\hline
 {\tt IBMTest2}~(91)& 37.28 & \textcolor{red}{\bf 21.84} &22.45  \\
%\hline
{\tt Candela}~(351) &163.80  &\textcolor{blue}{\bf 133.6} & \textcolor{red}{\bf 72.15} \\
%\hline
{\tt  Caviar1}~(610)& 279.99 & \textcolor{blue}{\bf 213.99} & \textcolor{red}{\bf 120.58} \\
%\hline
{\tt  Caviar2}~(461)&199.16  & \textcolor{blue}{\bf 158.1} & \textcolor{red}{\bf 85.68}  \\
%\hline
{\tt Cavignal}~(258)& 71.26 & \textcolor{blue}{\bf 70.77}& \textcolor{red}{\bf 39} \\
%\hline
{\tt  HumanBody}~(741)&261.94  & \textcolor{blue}{\bf227.25} & \textcolor{red}{\bf 134.83}\\
%\hline
{\tt  HallandMonitor}~(296)&116.86  & \textcolor{blue}{\bf 88.99} & \textcolor{red}{\bf 59.63}\\
%\hline
{\tt Highway1}~(440)&155.84  &\textcolor{blue}{\bf 134.03} & \textcolor{red}{\bf 81.44}\\
%\hline
{\tt  Highway2}~(500)&181.85  &\textcolor{blue}{\bf 156.92} & \textcolor{red}{\bf 87} \\
%\hline
{\tt Basic-HD}~(600)&599.06  & \textcolor{blue}{\bf 457.2464} & \textcolor{red}{\bf 382.41}\\
%\hline
{\tt  Toscana-HD}~(6)&7.73  & \textcolor{blue}{\bf 5.13} & \textcolor{red}{\bf 3.1}\\
\hline
\end{tabular}
\end{center}
\caption{\small{Computational time~(in seconds) comparison for online methods.}}\label{time}
\end{table}

\section{Appendix 2 : Augmented Lagrangian Method of Multipliers~(ALM)}

In this section, we demonstrate  an additional background estimation method by using the decomposition model used in the main paper. This method was not described in the main paper. As mentioned in the {\bf Further contributions} Section, we device a batch background estimation model (\textcolor{red}{fifth method})  by using the augmented Lagrangian method of multipliers (ALM). 

 \subsection{The algorithm}
 The Augmented Lagrangian method of multipliers are one of the most popular class of algorithms in convex programming.~In our setup, the proposed method does not provide an incremental algorithm. Instead it relies on fast batch processing of the video sequence.~We can write~\eqref{dutta_richtarik_l1} as an equality constrained problem by introducing the variable $F_2$ as follows:
  \begin{eqnarray}\label{dutta_richtarik_l1_constrained}
&\min_{F_2, S}\|F_2\|_{\ell_1}\nonumber\\
 &{\rm subject~to~} A_2=QS+F_2.
\end{eqnarray}
 We now form the augmented Lagrangian  of~\eqref{dutta_richtarik_l1_constrained}:
\begin{eqnarray}\label{lagrange}
L(S,F_2,Y,\mu) = \|F_2\|_{\ell_1}+\langle Y, A_2-QS-F_2\rangle\\
 +\frac{\mu}{2}\|A_2-QS-F_2\|_F^2,\nonumber
\end{eqnarray}
where $Y\in\mathbb{R}^{m\times n}$ is the Lagrange multiplier, $\langle Y, X \rangle = {\rm Trace} (Y^\top X)$ is the trace inner product, and $\mu>0$ is a penalty parameter. Completing the square and keeping only the relevant terms in~(\ref{lagrange}), for the given iterates $\{S^{(k)},F_2^{(k)},Y^{(k)},\mu_k\}$~we have
\begin{eqnarray*}\label{lagrange1}
S^{(k+1)}&=&\arg\min_{S}L(S,F_2^{(k)},Y^{(k)},\mu_k)\\
&=&\arg\min_S\frac{\mu_k}{2}\left\|A_2-QS-F_2^{(k)}+\frac{1}{\mu_k}Y^{(k)}\right\|_F^2,\\
F_2^{(k+1)}&=&\arg\min_{F_2}L(S^{(k+1)},F_2,Y^{(k)},\mu_k)\\
&=&\arg\min_{F_2} \|F_2\|_{\ell_1}+\frac{\mu_k}{2}\left\|A_2-QS^{(k+1)}-F_2+\frac{1}{\mu_k}Y^{(k)}\right\|_F^2.
\end{eqnarray*}

\begin{algorithm}
\small{
    \SetAlgoLined
    \SetKwInOut{Input}{Input}
    \SetKwInOut{Output}{Output}
    \SetKwInOut{Init}{Initialize}
    \nl\Input{$A=[A_1\;\;A_2] \in\mathbb{R}^{m\times n'}$ (data matrix),~threshold $\epsilon>0,\rho>1, \mu_0>0$\;}
    \nl\Init {$A_1=QR, Y^{(0)}= A_2/{\|A_2\|_{\infty}}, S^{(0)}, F_2^{(0)}$\;}
    %\BlankLine
    \nl \While{not converged}
    {
        \nl $S^{(k+1)}= Q^\top (A_2-F_2^{(k)}+\frac{1}{\mu_k}Y^{(k)})$\;
        \nl $F_2^{(k+1)} = \mathcal{S}_{\frac{1}{\mu_k}}(A_2-QS^{(k+1)}+\frac{1}{\mu_k}Y^{(k)})$\;
        \nl $Y^{(k+1)}=Y^{(k)} +\mu_k(A_2-QS^{(k+1)}-F_2^{(k+1)})$\;
        \nl $\mu_{k+1}=\rho\mu_k$\;
        \nl $k=k+1$\;
    }
    %\BlankLine
    \nl \Output{$S^{(k)},F_2^{(k)}$}}
    \caption{ALM}\label{slm}
\end{algorithm}

The solution to the first subproblem is obtained by setting the gradient of $L(S,F_2^{(k)},Y^{(k)},\mu_k)$ with respect to $S$ to 0, and using the fact that $Q^\top Q = I$:
\begin{eqnarray}\label{sk}
S^{(k+1)}= Q^\top \left(A_2-F_2^{(k)}+\frac{1}{\mu_k}Y^{(k)}\right).
\end{eqnarray}
The second subproblem is the classic sparse recovery problem and its solution is given by 
\begin{eqnarray}\label{fk}
F_2^{(k+1)} = \mathcal{S}_{\frac{1}{\mu_k}}\left(A_2-QS^{(k+1)}+\frac{1}{\mu_k}Y^{(k)}\right),
\end{eqnarray}
where $\mathcal{S}_{\frac{1}{\mu_k}}(\cdot)$ is the elementwise shrinkage function~\cite{shrinkage,shrinkage2}.~We update $Y_k$ and $\mu_k$ via:
\begin{eqnarray}\label{yk}
\left\{\begin{array}{ll}
Y^{(k+1)}=Y^{(k)} +\mu_k(A_2-QS^{(k+1)}-F_2^{(k+1)})\\
\mu_{k+1}=\rho\mu_k
\end{array},\right.
\end{eqnarray}
for a fixed $\rho>1.$

\paragraph{Dual problem.} Next we formulate the Lagrangian dual of~\eqref{dutta_richtarik_l1} to get an insight into the choice of the Lagrange multiplier $Y$. Using standard arguments, we obtain
\begin{eqnarray}\label{lagrange_dual}
\min_{F_2,S \;:\; A_2 = QS + F_2} \|F_2\|_{\ell_1} &=& \min_{F_2,S} \sup_{Y} \|F_2\|_{\ell_1}\notag \\
&&+\langle Y, A_2 - QS - F_2 \rangle \notag \\
& \geq & \sup_{Y}\min_{F_2,S} \|F_2\|_{\ell_1} \notag \\
&&+\langle Y, A_2-QS-F_2\rangle \nonumber\\
%&=&\sup_{Y}\left\{\langle Y, A_2\rangle+\min_{F_2}(\|F_2\|_{\ell_1}-\langle Y,F_2\rangle)-\sup_S\langle Y,QS\rangle\right\}\nonumber\\
%&=&\sup_{Y}\left\{\langle Y, A_2\rangle-\sup_{F_2}(\langle Y,F_2\rangle - \|F_2\|_{\ell_1} ) -\sup_S\langle Q^\top Y,S\rangle\right\}\nonumber\\
&=& \sup_{Y \;:\; \|Y\|_{\infty}\le 1, \; Q^\top Y = 0} \langle Y, A_2\rangle. \notag\\
\end{eqnarray}
The last problem above is the dual of~\eqref{dutta_richtarik_l1} . Clearly, the dual is a linear program. Note that the constraint $Q^\top Y$ dictates that the columns of $Y$ be orthogonal to all columns of $Q$ . That is, the columns of $Y$ must be from the nullspace of $Q$. If we relax this constraint, the resulting problem has a simple closed form solution, namely
\[Y^{(0)} = A_2/{\|A_2\|_{\infty}}.\]
This is a good choice for the initial value of $Y$ in Algorithm~\ref{slm} .

\subsection{Grassmannian robust adaptive subspace estimation~(GRASTA)}\label{Grasta:sec}

Due to close connection with our ALM, we explain the Grassmannian robust adaptive subspace estimation~(GRASTA) in this section. 
In 2012, He et al.~\cite{grasta} proposed GRASTA, a robust subspace tracking algorithm, and showed its application in background estimation problem. Unlike Robust PCA~\cite{LinChenMa,APG}, GRASTA is not a batch-video background estimation algorithm. GRASTA solves the background estimation problem in an incremental manner, considering one frame at a time.~At each time step $i$, it observes a subsampled video frame $a_{{i}_{\Omega_s}}$. That is, each video frame $a_i\in\mathbb{R}^{m}$ is subsampled over the index set $\Omega_s\subset\{1,2,\cdots,m\}$ to produce $a_{{i}_{\Omega_s}}$, where $s$ is the subsample percentage. Similarly, denote the foreground as $F_2 = (f_1,\dots,f_n)$. Therefore, $f_{{i}_{\Omega_s}}\in\mathbb{R}^{|{\Omega_s}|}$ is  a vector whose entries are indexed by $\Omega_s$.~Considering each video frame $a_{{i}_{\Omega_s}}$ has a low rank (say, $r$) and sparse structure, GRASTA models the video frame as:
\begin{eqnarray*}
a_{{i}_{\Omega_s}}=U_{{\Omega_s}}x+f_{{i}_{\Omega_s}}+\epsilon_{{\Omega_s}},
\end{eqnarray*}
where $U\in\mathbb{R}^{{m}\times r}$ be an orthonormal basis of the low-dimensional subspace, $x\in\mathbb{R}^r$ is a weight vector, and $\epsilon_{\Omega_s}\in\mathbb{R}^{|{\Omega_s}|}$ is a Gaussian noise vector. The matrix $U_{\Omega_s}\in\mathbb{R}^{|{\Omega_s}|\times r}$ results from choosing the rows of $U$ corresponding to the index set ${\Omega_s}$.~With the notations above, at each time step $i$, GRASTA solves the following optimization problem:~For a given orthonormal basis $U_{\Omega_s}\in\mathbb{R}^{|{\Omega_s}|\times r}$ solve
\begin{eqnarray}\label{Grasta}
\min_x\|U_{\Omega_s}x-a_{{i}_{\Omega_s}}\|_{\ell_1}.
\end{eqnarray}
Problem~\eqref{Grasta} is the classic least absolute deviations problem similar to~\eqref{dutta_richtarik_l1: decompose} and can be rewritten as:
\begin{eqnarray}\label{Grasta_alm}
&\min_{f_{{i}_{\Omega_s}}}\|f_{{i}_{\Omega_s}}\|_{\ell_1}\nonumber\\
&{\rm subject~to~} U_{{\Omega_s}}x+f_{{i}_{\Omega_s}}-a_{{i}_{\Omega_s}}=0.
\end{eqnarray}
Problem~\eqref{Grasta_alm} can be solved by the use of the augmented Lagrangian multiplier method~(ALM)~\cite{boyd}.~In GRASTA, after updating $x$ and $f_{{i}_{\Omega_s}}$, one has to update the orthonormal basis $U_{\Omega_s}$ as well. The rank one $U_{\Omega_s}$ update step is done first by finding a gradient of the augmented Lagrange dual of~\eqref{Grasta_alm}, and then by using the classic gradient descent algorithm. In summary, at each time step $i$, given a $U^{(i)}\in\mathbb{R}^{{m}\times r}$ and $\Omega_s\subset\{1,2,\cdots,m\}$, GRASTA finds $x$ and $f_{{i}_{\Omega_s}}$ via \eqref{Grasta_alm} and then updates $U_{{\Omega_s}}^{(i+1)}$. This process continues until the video frames are exhausted. 

\paragraph{Comparison between ALM and GRASTA.}
1. At each step of GRASTA, the background and the sparse foreground are given as $U_{{\Omega_s}}x$ and  $a_{{i}_{\Omega_s}} -U_{{\Omega_s}}x$, respectively and then one has to update the basis $U_{{\Omega_s}}$. In contrast,~\eqref{dutta_richtarik_l1_constrained} solves a supervised batch video background estimation problem. In our model, once we obtain the basis set from the $QR$ decomposition of the background matrix $A_1$, we do not update the basis further. 2. GRASTA lacks a convergence analysis which is harder to obtain as the objective function~\eqref{Grasta} in their set-up is only convex in each component.~\cite{grasta}. Our objective function in~\eqref{dutta_richtarik_l1} and in \eqref{lagrange} are convex and therefore allow us to propose a thorough convergence analysis for ALM. %3. As explained in Section \ref{complexity:ALM}, one iteration of our algorithm is much cheaper compare to that of GRASTA.}
 
\subsection{Cost of One Iteration}\label{complexity:ALM}

We discuss the complexity of one iteration of Algorithm~\ref{slm} when $A_1$ is of full rank, that is, ${\rm rank}(A_1)=r$. The complexity of the $QR$ decomposition at the initialization step is $\mathcal{O}(2mr^2-\frac{2}{3}r^3)$. Because $r\le r_{{\rm max}}$, the maximum number of available training frames, the above cost can be controlled by the user. Next, the complexity of one iteration of Algorithm~\ref{slm} is $\mathcal{O}(mnr).$ In contrast, the cost of each iteration of GRASTA is $\mathcal{O}(|{\Omega_s}|r^3+Kr|{\Omega_s}|+mr^2),$ where $K$ is the number of inner iterations and $|{\Omega_s}|$ is the cardinality of the index set $\Omega_s\subset\{1,2,\cdots,m\}$  from which each video frame $a_i\in\mathbb{R}^{m}$ is subsampled at a percentage $s$~(see Section~\ref{Grasta:sec}). 

\subsection{Stopping Criteria}
Define $L_{k}:=L(S^{(k)},F_2^{(k)},Y^{(k-1)},\mu_{k-1})$. With the notations above, for a given $\epsilon>0$, Algorithm~\ref{slm} converges if $\|A_2-QS^{(k)}-F_2^{(k)}\|_F/\|A_2\|_F<\epsilon$, or $|L_{k}-L_{k-1}|<\epsilon$, or if the maximum iteration is reached.
\begin{figure*}
    \centering
    \begin{subfigure}{0.48\textwidth}
    \includegraphics[width=\textwidth]{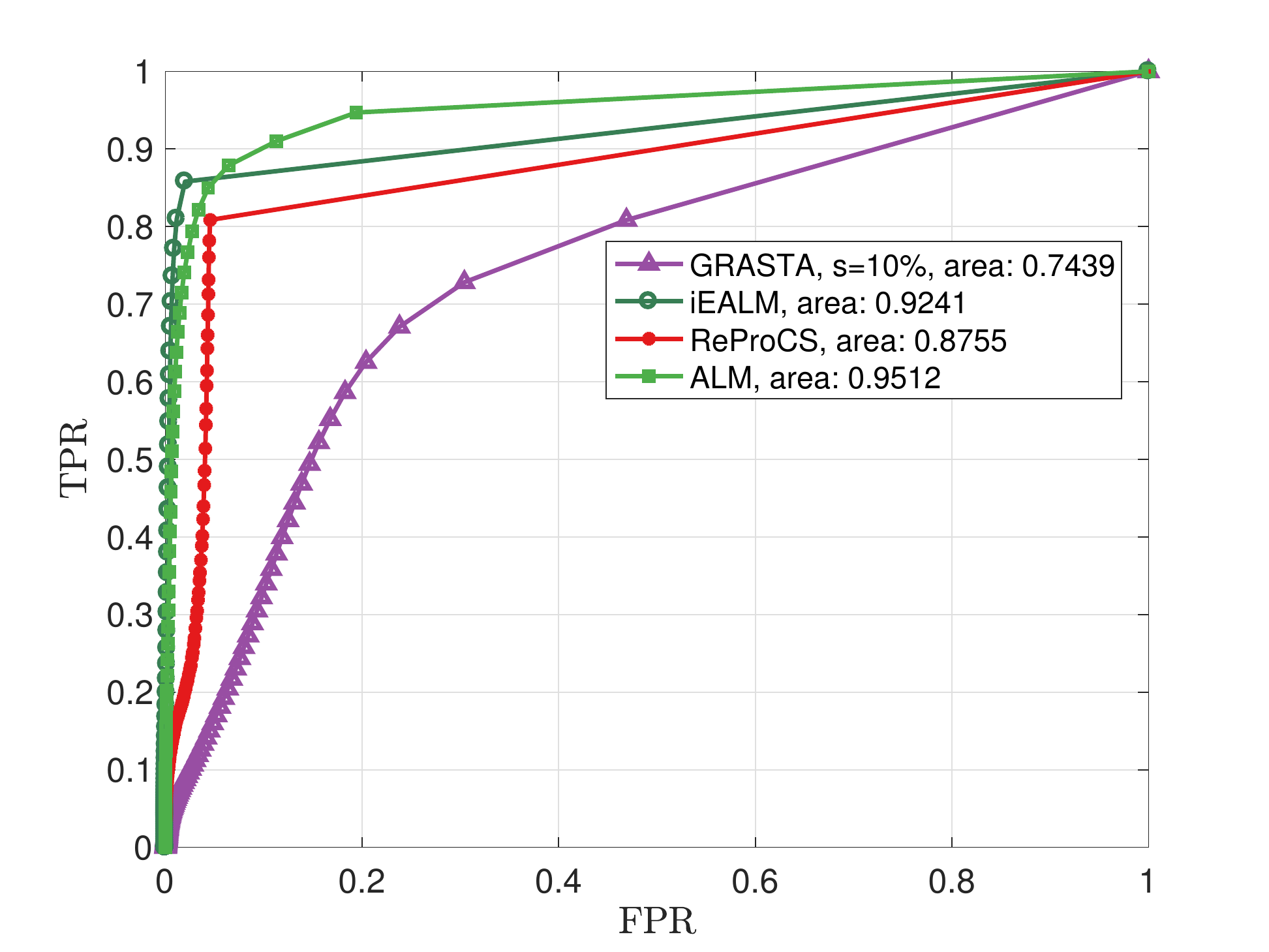}
        \caption{}
      \label{ROC_methods}
      \end{subfigure}
    \begin{subfigure}{0.49\textwidth}
    \includegraphics[width = \textwidth]{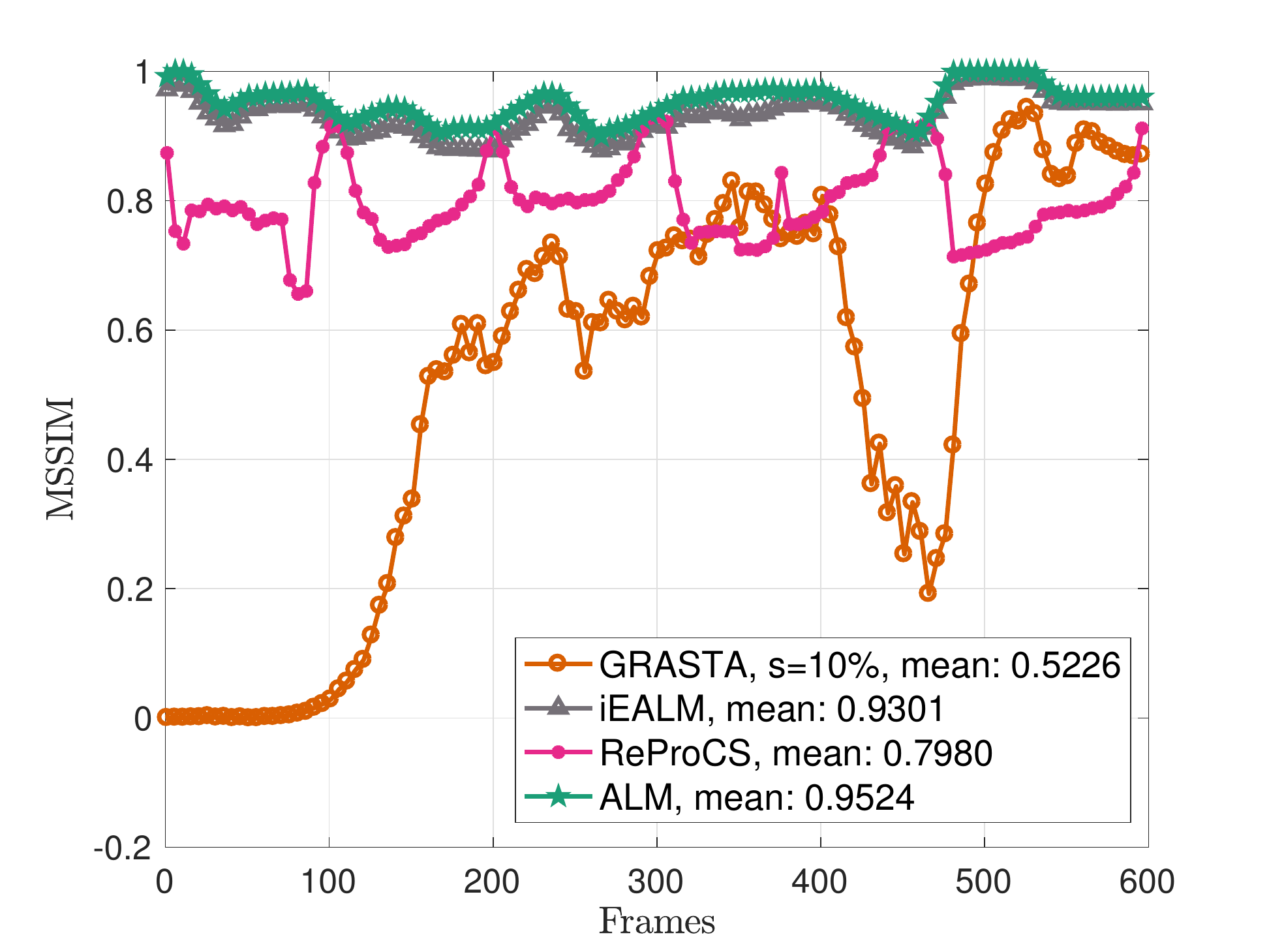}
    \caption{}
    \label{alm_ssim}
  \end{subfigure} 
  \caption{\small{(a) ROC curve to compare between ALM, iEALM, GRASTA, and ReProCS on {\tt Basic} video, frame size $144\times176$.} (b) \small{Comparison of Mean SSIM~(MSSIM) of ALM, iEALM, GRASTA, and ReProCS on~{\tt Basic} video. ALM has the best MSSIM.~To process 600 frames each of size $144\times176$, iEALM takes 164.03 seconds, GRASTA takes 20.25 seconds, ReProCS takes \textcolor{blue}{14.20} seconds, and  ALM takes~\textcolor{red}{13.13} seconds.}}
\end{figure*}
\subsection{Remarks on the Behaviour of ALM}

In this section, we propose the convergence of Algorithm~\ref{slm}. 
\begin{lemma}\label{bound:yk}
The sequence $\{{Y}^{(k)}\}$ is bounded.
\end{lemma}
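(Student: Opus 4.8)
The plan is to exploit the first-order optimality condition of the $F_2$-subproblem and to observe that the multiplier update forces $Y^{(k+1)}$ to be a subgradient of the $\ell_1$ norm, an object that is uniformly bounded. First I would write down the optimality condition for the shrinkage step (line 5 of Algorithm~\ref{slm}). Since $F_2^{(k+1)}$ is the exact minimizer of $\|F_2\|_{\ell_1} + \tfrac{\mu_k}{2}\|A_2 - QS^{(k+1)} - F_2 + \tfrac{1}{\mu_k}Y^{(k)}\|_F^2$, the subdifferential optimality condition reads
\[ 0 \in \partial\|F_2^{(k+1)}\|_{\ell_1} - \mu_k\Bigl(A_2 - QS^{(k+1)} - F_2^{(k+1)} + \tfrac{1}{\mu_k}Y^{(k)}\Bigr). \]

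Next I would simplify the bracketed term using the dual update $Y^{(k+1)} = Y^{(k)} + \mu_k(A_2 - QS^{(k+1)} - F_2^{(k+1)})$, which shows that $\mu_k(A_2 - QS^{(k+1)} - F_2^{(k+1)}) + Y^{(k)} = Y^{(k+1)}$. Substituting this into the optimality condition collapses it to the key inclusion
\[ Y^{(k+1)} \in \partial\|F_2^{(k+1)}\|_{\ell_1}. \]
I would then invoke the elementwise characterization of the subdifferential of the matrix $\ell_1$ norm: for any $F$, every matrix $G \in \partial\|F\|_{\ell_1}$ has entries $G_{ij} = \mathrm{sign}(F_{ij})$ when $F_{ij}\neq 0$ and $G_{ij}\in[-1,1]$ when $F_{ij}=0$, so in all cases $\|G\|_{\infty}\le 1$. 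Applying this to $Y^{(k+1)}$ gives $\|Y^{(k+1)}\|_{\infty}\le 1$ for every $k\ge 0$. Combined with the initialization $Y^{(0)} = A_2/\|A_2\|_{\infty}$, which also satisfies $\|Y^{(0)}\|_{\infty}\le 1$, the entire sequence $\{Y^{(k)}\}$ lies in the $\ell_\infty$-unit ball and is therefore bounded. This is also consistent with the dual-feasibility constraint $\|Y\|_{\infty}\le 1$ derived for the Lagrangian dual of \eqref{dutta_richtarik_l1}.

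The main obstacle here is not depth but rigor at the first step: I would need to confirm that the shrinkage operator $\mathcal{S}_{1/\mu_k}$ returns the \emph{exact} minimizer of the $F_2$-subproblem, so that the subgradient inclusion holds at $F_2^{(k+1)}$, and to state the subdifferential of the nonsmooth $\ell_1$ norm carefully in the zero coordinates. Once that optimality condition is correctly pinned down, the identification $Y^{(k+1)}\in\partial\|F_2^{(k+1)}\|_{\ell_1}$ and the $\|\cdot\|_\infty\le 1$ bound follow immediately, and no growth or telescoping argument in $\mu_k$ is needed.
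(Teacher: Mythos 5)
Your proof is correct and takes essentially the same route as the paper: both derive the key inclusion $Y^{(k+1)}\in\partial\|F_2^{(k+1)}\|_{\ell_1}$ from the optimality condition of the $F_2$-subproblem combined with the multiplier update, and then bound the multipliers by the dual ($\ell_\infty$) norm. The only difference is that you make the last step self-contained via the elementwise characterization of $\partial\|\cdot\|_{\ell_1}$, where the paper instead cites Theorem 4 of~\cite{LinChenMa}.
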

\begin{proof}
By the optimality condition of $F_2^{(k+1)}$ we have,
$$
0\in\partial_{F_2}L(S^{(k+1)},F_2,Y^{(k)},\mu_k).
$$
Therefore, 
$$
0\in\partial\|F_2^{(k+1)}\|_{\ell_1}-\mu_k(A_2-QS^{(k+1)}-F_2^{(k+1)}+\frac{1}{\mu_k}Y^{(k)}), 
$$
which implies ${Y}^{(k+1)}\in\partial\|F_2^{(k+1)}\|_{\ell_1}.$~By using Theorem 4 in~\cite{LinChenMa}~(see also~\cite{watson}), we conclude that the sequence $\{Y^{(k)}\}$ is bounded by the dual norm of $\|\cdot\|_{\ell_1}$, that is, the $\|\cdot\|_{\infty}$ norm. 
\end{proof}

\begin{theorem}\label{Th:1}
There is a constant $\gamma$ such that
$$\|A_2-QS^{(k)}-F_2^{(k)}\|\le \frac{\gamma}{\mu_k},~~k=1,2,\cdots.$$
\end{theorem}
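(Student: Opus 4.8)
The plan is to exploit the structure of the multiplier update in Algorithm~\ref{slm}, which directly encodes the primal residual $A_2 - QS^{(k)} - F_2^{(k)}$ as a rescaled difference of consecutive dual iterates. Rearranging
\[ Y^{(k)} = Y^{(k-1)} + \mu_{k-1}\left(A_2 - QS^{(k)} - F_2^{(k)}\right) \]
gives the exact identity
\[ A_2 - QS^{(k)} - F_2^{(k)} = \frac{1}{\mu_{k-1}}\left( Y^{(k)} - Y^{(k-1)} \right), \qquad k = 1, 2, \dots, \]
so that controlling the residual reduces to bounding $\|Y^{(k)} - Y^{(k-1)}\|$ and accounting for the growth of $\mu_k$.

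Next I would invoke Lemma~\ref{bound:yk}, which furnishes a constant $C > 0$ with $\|Y^{(k)}\| \le C$ for every $k$ (the lemma gives $\|Y^{(k)}\|_\infty \le 1$; since all norms on $\R^{m\times n}$ are equivalent, this transfers to whatever norm appears in the theorem, at the cost of an absolute constant). The triangle inequality then yields $\|Y^{(k)} - Y^{(k-1)}\| \le \|Y^{(k)}\| + \|Y^{(k-1)}\| \le 2C$, whence
\[ \left\|A_2 - QS^{(k)} - F_2^{(k)}\right\| \le \frac{2C}{\mu_{k-1}}. \]

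Finally I would pass from $\mu_{k-1}$ to $\mu_k$ through the geometric update $\mu_k = \rho\,\mu_{k-1}$, i.e.\ $\mu_{k-1} = \mu_k/\rho$, obtaining
\[ \left\|A_2 - QS^{(k)} - F_2^{(k)}\right\| \le \frac{2C\rho}{\mu_k}, \]
and setting $\gamma := 2C\rho$ completes the argument. The substantive work has already been carried out in Lemma~\ref{bound:yk}: once dual boundedness is established, the theorem is essentially a one-line rearrangement. The only mild point requiring care is the index bookkeeping---matching the residual at step $k$ to the penalty $\mu_k$ rather than $\mu_{k-1}$, which is precisely where the factor $\rho$ enters---so I anticipate no obstacle of genuine depth here; the crux of the convergence analysis resides in the boundedness lemma rather than in this consequence.
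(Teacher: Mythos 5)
Your proposal is correct and follows essentially the same route as the paper: rearrange the multiplier update \eqref{yk} into the exact identity $A_2-QS^{(k)}-F_2^{(k)}=\tfrac{1}{\mu_{k-1}}(Y^{(k)}-Y^{(k-1)})$ and then invoke the boundedness of $\{Y^{(k)}\}$ from Lemma~\ref{bound:yk}. The only difference is that you make explicit two points the paper glosses over---the norm-equivalence transfer from the $\|\cdot\|_\infty$ bound and the factor $\rho$ needed to replace $\mu_{k-1}$ by $\mu_k$---which is careful bookkeeping rather than a different argument.
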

\begin{proof}
By using~\eqref{yk} we have 
\begin{eqnarray*}
A_2-QS^{(k)}-F_2^{(k)}=\frac{1}{\mu_{k-1}}(Y^{(k)}-Y^{(k-1)}).
\end{eqnarray*}
The result follows by applying Lemma~\ref{bound:yk}.

\end{proof}
\begin{figure}
 \centering
     \includegraphics[width = 0.4\textwidth]{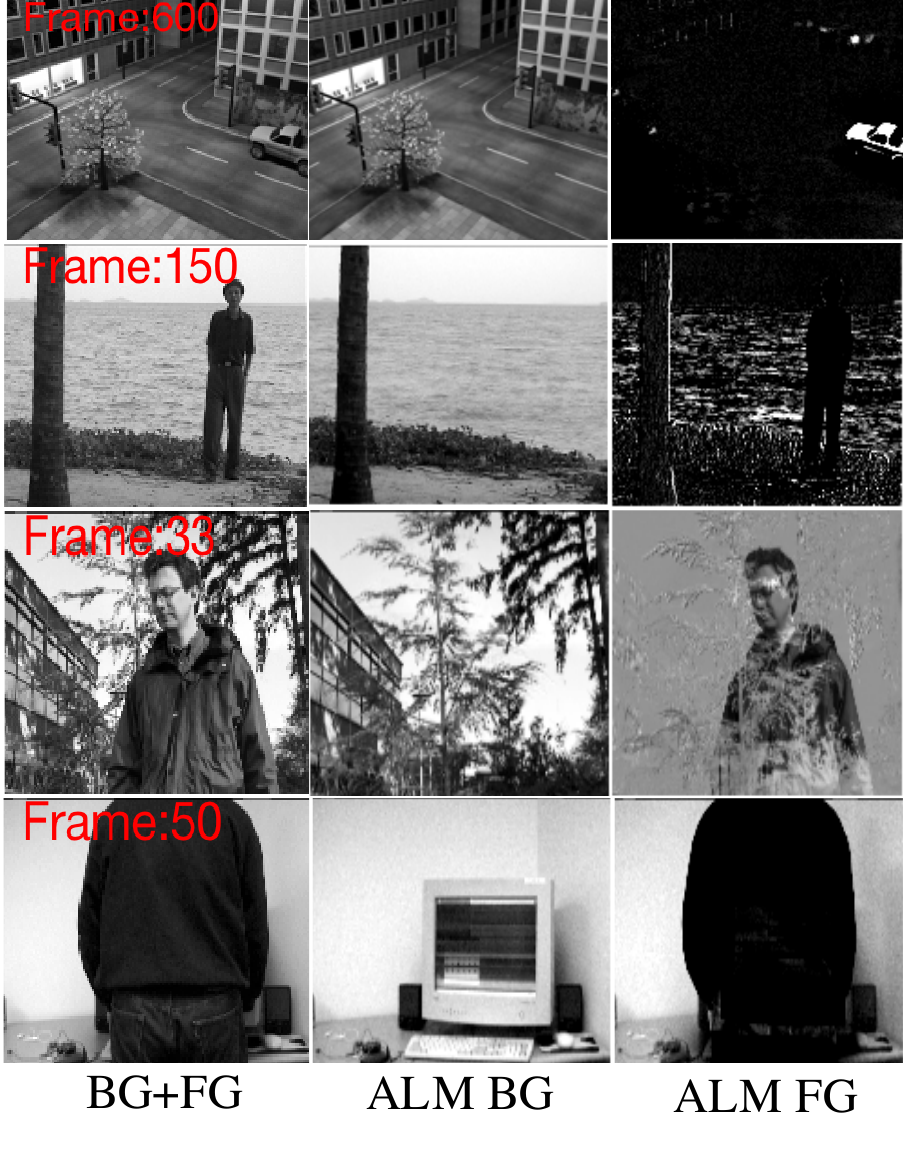}
    \caption{\small{Background and foreground recovered by ALM.~The videos have static foreground and dynamic background.}}
    \label{alm}
 \end{figure}

\begin{theorem} 
The sequence $\{L_{k}\}$ is bounded above and
$$
L_{k+1}-L_{k}\le O\left(\frac{1}{\mu_{k-1}}\right),~~k=1,2,\cdots.
$$
\end{theorem}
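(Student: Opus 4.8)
The plan is to decompose the one-step increment $L_{k+1}-L_k$ into a \emph{primal} part and a \emph{dual--penalty} part and to bound each separately. Writing $R_k := A_2 - QS^{(k)} - F_2^{(k)}$ for the primal residual and inserting the intermediate value $L(S^{(k)},F_2^{(k)},Y^{(k)},\mu_k)$, I would split
\[
L_{k+1}-L_k = \Big[L(S^{(k+1)},F_2^{(k+1)},Y^{(k)},\mu_k) - L(S^{(k)},F_2^{(k)},Y^{(k)},\mu_k)\Big] + \Big[L(S^{(k)},F_2^{(k)},Y^{(k)},\mu_k) - L_k\Big],
\]
and call the two bracketed terms (I) and (II).

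First I would dispose of (I). Because $S^{(k+1)}$ is the exact minimizer of $L(\cdot,F_2^{(k)},Y^{(k)},\mu_k)$ and $F_2^{(k+1)}$ is the exact minimizer of $L(S^{(k+1)},\cdot,Y^{(k)},\mu_k)$, the alternating minimization can only decrease the Lagrangian at the frozen pair $(Y^{(k)},\mu_k)$, so (I)$\,\le 0$. Next I would compute (II) exactly. There only the multiplier and penalty arguments change, and they enter $L$ solely through $\langle Y,R_k\rangle+\tfrac{\mu}{2}\|R_k\|_F^2$ (the $\|F_2^{(k)}\|_{\ell_1}$ term cancels), so
\[
\text{(II)} = \langle Y^{(k)}-Y^{(k-1)},\,R_k\rangle + \tfrac{\mu_k-\mu_{k-1}}{2}\|R_k\|_F^2.
\]
Substituting the multiplier update $Y^{(k)}-Y^{(k-1)}=\mu_{k-1}R_k$ and the penalty update $\mu_k-\mu_{k-1}=(\rho-1)\mu_{k-1}$ collapses this to $\tfrac{\rho+1}{2}\,\mu_{k-1}\|R_k\|_F^2$. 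Invoking Theorem~\ref{Th:1}, which gives $\|R_k\|_F\le \gamma/\mu_k=\gamma/(\rho\mu_{k-1})$, then yields $\text{(II)}\le \tfrac{(\rho+1)\gamma^2}{2\rho^2}\cdot\tfrac{1}{\mu_{k-1}}=O(1/\mu_{k-1})$. Combining with (I)$\,\le 0$ establishes the stated inequality.

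For boundedness above I would telescope: $L_{K+1}=L_1+\sum_{k=1}^{K}(L_{k+1}-L_k)\le L_1 + C\sum_{k=1}^{K}\mu_{k-1}^{-1}$ for the constant $C=\tfrac{(\rho+1)\gamma^2}{2\rho^2}$. Since $\mu_{k-1}=\rho^{k-1}\mu_0$ grows geometrically (as $\rho>1$), the series $\sum_k \mu_{k-1}^{-1}$ converges, so the partial sums are uniformly bounded and hence $\{L_k\}$ is bounded above.

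The computation is essentially routine once the splitting is in place; the delicate point is the exact cancellation in (II), where one must use \emph{both} the multiplier update and the penalty update and then convert a quantity of size $\mu_{k-1}\|R_k\|_F^2$ — with $\|R_k\|_F^2=O(\mu_k^{-2})$ by Theorem~\ref{Th:1} — into the claimed $O(\mu_{k-1}^{-1})$ bound. The second ingredient worth flagging is that the boundedness-above conclusion genuinely relies on the geometric growth of $\mu_k$ (equivalently, on $\rho>1$); the per-step estimate alone would not suffice without summability.
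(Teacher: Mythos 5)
Your proof is correct and takes essentially the same route as the paper's: the alternating-minimization step gives your term (I)$\,\le 0$, your exact expansion of (II) together with the updates $Y^{(k)}-Y^{(k-1)}=\mu_{k-1}R_k$ and $\mu_k=\rho\mu_{k-1}$ reproduces the paper's bound $L_{k+1}-L_k\le \tfrac{\mu_k+\mu_{k-1}}{2}\|R_k\|_F^2$, and your appeal to Theorem~\ref{Th:1} is equivalent to the paper's direct substitution of the multiplier update followed by Lemma~\ref{bound:yk}, since Theorem~\ref{Th:1} is itself proved from exactly those two ingredients. One difference worth noting in your favor: the paper's proof stops at the per-step estimate and never explicitly establishes the claimed boundedness above of $\{L_k\}$, whereas your telescoping argument, using the geometric growth $\mu_k=\rho^k\mu_0$ with $\rho>1$ to sum the series $\sum_k \mu_{k-1}^{-1}$, actually completes that part of the statement.
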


\begin{proof}We have, 
\begin{eqnarray}
L_{k+1} &=&L(S^{(k+1)},F_2^{(k+1)},Y^{(k)},\mu_{k})\nonumber\\
&\le& L(S^{(k+1)},F_2^{(k)},Y^{(k)},\mu_{k})\nonumber\\
&\le& L(S^{(k)},F_2^{(k)},Y^{(k)},\mu_{k})\nonumber\\
&=& \|F_2^{(k)}\|_{\ell_1}+\langle Y^{(k)}, A_2-QS^{(k)}-F_2^{(k)}\rangle\nonumber\\
&&+\frac{\mu_k}{2}\|A_2-QS^{(k)}-F_2^{(k)}\|_F^2\nonumber\\
&=& \|F_2^{(k)}\|_{\ell_1}+\langle Y^{(k-1)}, A_2-QS^{(k)}-F_2^{(k)}\rangle\nonumber\\
&&+\frac{\mu_{k-1}}{2}\|A_2-QS^{(k)}-F_2^{(k)}\|_F^2 \notag \\ 
&& \qquad +\langle Y^{(k)}-Y^{(k-1)}, A_2-QS^{(k)}-F_2^{(k)}\rangle\nonumber\\
&& +\frac{\mu_k-\mu_{k-1}}{2}\|A_2-QS^{(k)}-F_2^{(k)}\|_F^2\nonumber\\
&\overset{\text{(using~\eqref{yk})}}=&L_k+\mu_{k-1}\|A_2-QS^{(k)}-F_2^{(k)}\|_F^2\nonumber\\ 
&&+\frac{\mu_k-\mu_{k-1}}{2}\|A_2-QS^{(k)}-F_2^{(k)}\|_F^2\nonumber\\
&=&L_k+\frac{\mu_k+\mu_{k-1}}{2}\|A_2-QS^{(k)}-F_2^{(k)}\|_F^2.\nonumber
\end{eqnarray}
Therefore,
$$
L_{k+1}-L_{k}\le\frac{\mu_k+\mu_{k-1}}{2}\|A_2-QS^{(k)}-F_2^{(k)}\|_F^2,~~k=1,2,\cdots.
$$
By using \eqref{yk} we have for $k=1,2,\cdots$
\[
L_{k+1}-L_{k} \leq \frac{\mu_k+\mu_{k-1}}{\mu_{k-1}^2}\|Y^{(k)}-Y^{(k-1)}\|_F^2  =  \frac{1+\rho}{\mu_{k-1}}\|Y^{(k)}-Y^{(k-1)}\|_F^2.
\]
Next by using the boundedness of $\{{Y}^{(k)}\}$ we find
\begin{eqnarray*}
L_{k+1}-L_{k}&\le&O\left(\frac{1}{\mu_{k-1}}\right),~~k=1,2,\cdots,
\end{eqnarray*}
which is what we set out to prove.
\end{proof}

\begin{theorem} We have
$$
f^*-\|F_2^{(k)}\|_{\ell_1}\le O\left(\frac{1}{\mu_k}\right),
$$
where $f^*=\displaystyle{\min_{A_2=QS+F_2}\|F_2\|_{\ell_1}}.$
\end{theorem}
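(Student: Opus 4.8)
The plan is to exhibit a genuinely feasible point of the constrained problem whose $\ell_1$ objective is close to $\|F_2^{(k)}\|_{\ell_1}$, and then to invoke the feasibility-residual bound of Theorem~\ref{Th:1}. The key observation is that although the pair $(S^{(k)},F_2^{(k)})$ need not satisfy the constraint $A_2 = QS + F_2$ exactly, we can \emph{force} feasibility by keeping $S^{(k)}$ and replacing $F_2^{(k)}$ by the exact residual. Concretely, set $\hat{F}_2 := A_2 - QS^{(k)}$. Then $(S^{(k)},\hat{F}_2)$ satisfies $A_2 = QS^{(k)} + \hat{F}_2$ by construction, so it is feasible, and by definition of $f^*$ as the minimum over feasible pairs we immediately obtain $f^* \le \|\hat{F}_2\|_{\ell_1} = \|A_2 - QS^{(k)}\|_{\ell_1}$.

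First I would write $A_2 - QS^{(k)} = F_2^{(k)} + (A_2 - QS^{(k)} - F_2^{(k)})$ and apply the triangle inequality for the $\ell_1$ norm, which yields
\[
f^* \le \|A_2 - QS^{(k)}\|_{\ell_1} \le \|F_2^{(k)}\|_{\ell_1} + \|A_2 - QS^{(k)} - F_2^{(k)}\|_{\ell_1}.
\]
Rearranging gives $f^* - \|F_2^{(k)}\|_{\ell_1} \le \|A_2 - QS^{(k)} - F_2^{(k)}\|_{\ell_1}$, so it only remains to control the feasibility residual measured in the $\ell_1$ norm.

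Next I would invoke Theorem~\ref{Th:1}, which bounds the residual $\|A_2 - QS^{(k)} - F_2^{(k)}\|$ by $\gamma/\mu_k$. Since all norms on the finite-dimensional space $\R^{m\times n}$ are equivalent, there is a dimension-dependent constant $c$ with $\|\cdot\|_{\ell_1} \le c\|\cdot\|$, and hence $\|A_2 - QS^{(k)} - F_2^{(k)}\|_{\ell_1} \le c\gamma/\mu_k = O(1/\mu_k)$. Chaining this with the previous display completes the argument.

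The main conceptual step is the feasible-point construction; once one notices that snapping onto the constraint by adjusting only $F_2$ costs exactly the feasibility residual, the remainder is the triangle inequality together with the already-established Theorem~\ref{Th:1}. The only mild subtlety is norm bookkeeping: Theorem~\ref{Th:1} is stated in an unspecified norm, so I would either make the norm-equivalence constant $c$ explicit or simply note that it is absorbed into the $O(\cdot)$, thereby ensuring the residual is controlled in the same $\ell_1$ norm that appears in the objective.
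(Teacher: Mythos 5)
Your proof is correct and follows essentially the same route as the paper: both arguments rest on the feasibility of the pair $(S^{(k)},\,A_2-QS^{(k)})$ (which gives $f^*\le\|A_2-QS^{(k)}\|_{\ell_1}$), the triangle inequality, and the $O(1/\mu_k)$ bound on the residual $A_2-QS^{(k)}-F_2^{(k)}$. The only cosmetic difference is that you invoke Theorem~\ref{Th:1} as a black box (with a norm-equivalence remark to handle the unspecified norm), whereas the paper re-derives the residual bound directly from the multiplier update \eqref{yk} and the boundedness of $\{Y^{(k)}\}$ --- which is exactly the content of Theorem~\ref{Th:1} anyway.
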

\begin{proof}
By using the triangle inequality we have
\begin{eqnarray}
\|F_2^{(k)}\|_{\ell_1}&\ge&\|A_2-QS^{(k)}\|_{\ell_1}\nonumber\\
&&-\|A_2-QS^{(k)}-F_2^{(k)}\|_{\ell_1}\nonumber\\
&\overset{\text{(using~\eqref{yk})}}\ge&f^*-\frac{1}{\mu_{k-1}}\|Y^{(k)}-Y^{(k-1)}\|_{\ell_1}.\nonumber\\
\end{eqnarray}
The result follows by applying boundedness of  the multipliers $Y^{(k)}$.
\end{proof}

\section{Smooth Optimization of $\ell_1$ Regression with Parallel Coordinate Descent Methods~\cite{Richtarik_Olivier}}

Imagine a situation when one processes a very low-resolution video sequence with a huge number of available training frames. That is, when there are more training frames $r$ than the number of pixels $m$, the method used in~\cite{Richtarik_Olivier} to solve~\eqref{dutta_richtarik_l1: decompose_problems} for each $i$ could be more effective. In this scenario we propose to solve each $\ell_1$ regression problem in~\eqref{dutta_richtarik_l1: decompose_problems} by using the parallel coordinate descent methods on their smooth variants~\cite{Richtarik_Olivier}. Note that each $f_i(s_i)$ is a non-smooth continuous convex function on a compact set $E_1$. By using Nesterov's smoothing technique~\cite{Nesterov} one can find a smooth approximation $f^{\mu}_i(s_i)$ of $f_i(s_i)$ for any $\mu>0.$~Fercoq et al.~\cite{Richtarik_Olivier} minimized $f^{\mu}_i(s_i)$ to approximately solve the original $\ell_1$ regression problem that contains $f_i(s_i)$.

\section{Additional numerical experiments demonstrating the effectiveness of ALM}
To demonstrate the robustness of the ALM in batch mode, we compare ALM with other state-of-the-art batch background estimation methods, such as, iEALM~\cite{LinChenMa} of RPCA, GRASTA~\cite{grasta}, and ReProCS~\cite{reprocs} on the {\tt Basic} scenario. We use 15 training frames for ALM. Figure~\ref{ROC_methods} shows that ALM covers the maximum area under the ROC curve.~Additionally, in Figure~\ref{alm_ssim}, our ALM has the best mean SSIM~(MSSIM) among all other methods.~Moreover, in batch mode, ALM takes the least computational time.~The background and foreground recovered by ALM in batch mode also shows its effectiveness in supervised background estimation~(see Figure~\ref{alm})

{\bibliographystyle{plain}
%\footnotesize
\bibliography{egbib}
}

\end{document}